\tikzset{snake it/.style={decorate, decoration=snake}}
\theoremstyle{plain}
\newtheorem{thm}{Theorem}[section]
\newtheorem{cor}[thm]{Corollary}
\newtheorem{lem}[thm]{Lemma}
\newtheorem{prop}[thm]{Proposition}
\newtheorem{conj}[thm]{Conjecture}
\theoremstyle{definition}
\theoremstyle{remark}
\newtheorem{rmk}[thm]{Remark}
\newcommand{\BC}{{\mathbb{C}}}
\newcommand{\BP}{{\mathbb{P}}}
\newcommand{\BQ}{{\mathbb{Q}}}
\newcommand{\BR}{{\mathbb{R}}}
\newcommand{\BZ}{{\mathbb{Z}}}
\newcommand{\CA}{{\mathcal A}}
\newcommand{\CB}{{\mathcal B}}
\newcommand{\CD}{{\mathcal D}}
\newcommand{\CE}{{\mathcal E}}
\newcommand{\CU}{{\mathcal U}}
\newcommand{\CW}{{\mathcal W}}
\newcommand{\FM}{{\mathfrak{M}}}
\DeclareFontFamily{OT1}{rsfs}{}
\DeclareFontShape{OT1}{rsfs}{n}{it}{<-> rsfs10}{}
\DeclareMathAlphabet{\curly}{OT1}{rsfs}{n}{it}
\begin{document}
\title[The $D$-equivalence conjecture]{The $D$-equivalence conjecture for hyper-K\"ahler varieties via hyperholomorphic bundles}
\date{\today}

\author[D. Maulik]{Davesh Maulik}
\address{Massachusetts Institute of Technology}
\email{maulik@mit.edu}

\author[J. Shen]{Junliang Shen}
\address{Yale University}
\email{junliang.shen@yale.edu}

\author[Q. Yin]{Qizheng Yin}
\address{Peking University}
\email{qizheng@math.pku.edu.cn}

\author[R. Zhang]{Ruxuan Zhang}
\address{Fudan University}
\email{rxzhang18@fudan.edu.cn}

\begin{abstract}
We show that birational hyper-K\"ahler varieties of $K3^{[n]}$-type are derived equivalent, establishing the $D$-equivalence conjecture in these cases. The Fourier--Mukai kernels of our derived equivalences are constructed from projectively hyperholomorphic bundles, following ideas of Markman. Our method also proves a stronger version of the $D$-equivalence conjecture for hyper-K\"ahler varieties of $K3^{[n]}$-type with Brauer classes.
\end{abstract}

\maketitle

\setcounter{tocdepth}{1} 

\tableofcontents
\setcounter{section}{-1}

\section{Introduction}

Throughout, we work over the complex numbers $\BC$. We recall that the $D$-equivalence conjecture \cite{BO, K} predicts that birational Calabi--Yau varieties have equivalent bounded derived categories of coherent sheaves.

\begin{conj}[$D$-equivalence conjecture]\label{conj}
If $X, X'$ are nonsingular projective birational Calabi--Yau varieties, then there is an equivalence of bounded derived categories
\[
D^b(X) \simeq D^b(X').
\]
\end{conj}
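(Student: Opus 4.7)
The conjecture as stated remains open in full generality; I will outline a strategy only for the case established in this paper, namely birational hyper-K\"ahler varieties $X, X'$ of $K3^{[n]}$-type. No known approach addresses the general Calabi--Yau case, and the plan below exploits very specific features of the hyper-K\"ahler world.

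The overall strategy is to produce, on the product $X \times X'$, a Fourier--Mukai kernel whose induced functor $\Phi: D^b(X) \to D^b(X')$ is an equivalence. The kernel will be built from a \emph{projectively hyperholomorphic bundle} in the sense of Markman: a bundle whose projectivization is a twistor-invariant holomorphic object, and which therefore deforms flatly along twistor lines and more generally along the full moduli of hyper-K\"ahler deformations. First, I would use the deformation theory of birational hyper-K\"ahler pairs of $K3^{[n]}$-type (Huybrechts, Markman) to reduce to the following setup: there is a smooth family $\mathcal{X} \to B$ over a base $B$ whose very general fiber is a hyper-K\"ahler manifold isomorphic on both sides, and whose central fiber realizes the birational map $X \dashrightarrow X'$. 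Over the general fiber, the diagonal (shifted and normalized) is a natural FM kernel; the plan is to spread this out and specialize it to obtain a kernel on $X \times X'$.

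The construction of the hyperholomorphic bundle is where Markman's program enters. Starting from a primitive Mukai-type class on the Mukai lattice of $X \times X'$ obtained from the Hodge-theoretic identification $H^2(X,\BZ) \cong H^2(X',\BZ)$ induced by the birational map, I would invoke (a suitable enhancement of) Markman's existence results to produce a projectively hyperholomorphic sheaf $\CE$ on $X \times X'$. The projective nature forces one to work up to a Brauer obstruction, and this is precisely the reason the argument naturally upgrades to a twisted statement, giving the stronger version of the conjecture with Brauer classes mentioned in the abstract. One then checks that $\CE$ (or its twisted enhancement) has the correct Mukai vector so that the associated FM kernel has the expected numerical class, matching that of a would-be structure sheaf of the graph closure.

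The remaining step is to verify that the induced Fourier--Mukai transform is fully faithful and an equivalence. I would proceed by a deformation-invariance argument: fully faithfulness is a cohomological condition (vanishing of certain $\Ext$ groups computable from Mukai pairings), and since both the kernel and the Mukai lattice deform hyperholomorphically, one can check these conditions at a general point of $B$, where the two hyper-K\"ahler manifolds are isomorphic and the transform reduces to an explicit (twisted) autoequivalence. A Bondal--Orlov / Bridgeland-style criterion then upgrades full faithfulness to an equivalence, using that $X$ and $X'$ are Calabi--Yau and have equal dimension. The main obstacle, and the technical heart of the project, is the existence and control of the projectively hyperholomorphic bundle with the prescribed Mukai vector: one must guarantee its deformation along twistor families, handle the Brauer class carefully (since only the projective bundle is canonical), and translate the global hyper-K\"ahler geometry into a precise cohomological input that forces the FM transform to be an equivalence.
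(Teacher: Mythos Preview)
Your proposal has a genuine gap at the construction step. You want to invoke Markman's existence results to produce a projectively hyperholomorphic bundle directly on $X \times X'$, using the Hodge isometry $H^2(X,\BZ)\cong H^2(X',\BZ)$ coming from the birational map. But that isometry is the \emph{identity} after the natural identification, and Markman's bundles do not arise this way: they are deformations of the BKR bundle $\CU^{[n]}$ on $M^{[n]}\times S^{[n]}$ along the moduli space $\FM_\phi$ of a \emph{non-trivial} rational Hodge isometry $\phi$ manufactured from a $K3$ derived equivalence. The pair $(X,X')$ simply does not sit in such an $\FM_\phi$, so there is no input to which Markman's theorem applies. Your alternative suggestion of spreading out the diagonal and specializing is the classical obstacle the conjecture faces in the first place; the limit is not controlled.

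The paper's route is structurally different. It never builds a kernel on $X\times X'$ directly. Instead it reduces to the case where $X,X'$ correspond to adjacent chambers in $\CC_X$ separated by a single MBM wall $\CW^\perp$, and then chooses a $K3$ surface $S$ and Mukai vector $v_0$ so that the associated isometry $\phi$ sends $\CW$ to a class \emph{not} proportional to any MBM class on the other side. This guarantees the existence of a \emph{third} variety $Y''$ such that both quadruples $(Y'',\eta_{Y''},X,\eta_X)$ and $(Y'',\eta_{Y''},X',\eta_{X'})$ lie in the same component $\FM_\phi^0$; Markman's deformation then yields twisted equivalences $D^b(X,\alpha_X)\simeq D^b(Y'',\alpha_{Y''})\simeq D^b(X',\alpha_{X'})$, and one composes. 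The technical heart is not a general existence statement for hyperholomorphic bundles with prescribed Mukai vector, but the arithmetic choice of $(S,v_0)$---using boundedness of MBM classes (Theorem~\ref{thm1.6}) and a divisor of large norm (Proposition~\ref{prop2.2})---which simultaneously forces the wall-avoidance condition \eqref{wall_condition} and pins down the Brauer class to the desired $\alpha$. None of this triangulation through $Y''$ or the MBM-wall analysis appears in your outline, and without it the hyperholomorphic machinery cannot be brought to bear on the birational pair.
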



The purpose of this paper is to prove Conjecture \ref{conj} for hyper-K\"ahler varieties of $K3^{[n]}$-type; these are nonsingular projective varieties deformation equivalent to the Hilbert scheme of $n$ points on a $K3$ surface. More generally, our method reduces the $D$-equivalence conjecture for hyper-K\"ahler varieties to the construction of certain projectively hyperholomorphic bundles.

\begin{thm}\label{thm0.2}
Conjecture \ref{conj} holds for any hyper-K\"ahler varieties of $K3^{[n]}$-type.
\end{thm}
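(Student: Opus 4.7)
The plan is to construct, for each birational pair $X \dashrightarrow X'$ of hyper-K\"ahler varieties of $K3^{[n]}$-type, a Fourier--Mukai kernel $\CP \in D^b(X \times X')$ realizing an equivalence $D^b(X) \simeq D^b(X')$, where $\CP$ arises from a projectively hyperholomorphic bundle on $X \times X'$ in the sense of Verbitsky and Markman.

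\textbf{Initial reductions.} By Huybrechts, any such birational pair is deformation equivalent and carries canonically identified integral Hodge structures on cohomology; in particular, the two Mukai--Markman extended lattices of $X$ and $X'$ are identified. Via Markman's monodromy results for $K3^{[n]}$-type varieties, this identification can be realized along a locally trivial family that connects $(X, X')$ to a pair of moduli spaces of stable (twisted) sheaves on a single $K3$ surface $S$, with compatible Mukai vectors $v_1, v_2 \in \widetilde{H}(S, \BZ)$. It therefore suffices to construct $\CP$ on such a moduli-theoretic pair, to propagate it by deformation to every birational $K3^{[n]}$-type pair, and to verify the equivalence property.

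\textbf{Construction and deformation of the kernel.} On a moduli pair $M_1 = M(v_1)$, $M_2 = M(v_2)$ with (twisted) universal families $\CU_i$ on $S \times M_i$, form the complex
\[
\CE := Rp_{23*}\, R\hom(p_{12}^*\CU_1,\, p_{13}^*\CU_2) \in D^b(M_1 \times M_2),
\]
where the $p_{ij}$ are the projections from $S \times M_1 \times M_2$. Under suitable numerical hypotheses on $(v_1, v_2)$, the complex $\CE$ is concentrated in a single cohomological degree and, after projectivization, defines a projective bundle whose Chern classes match the expected cohomological FM action. Markman's program then gives that this projective bundle is \emph{projectively hyperholomorphic}, so by Verbitsky's deformation theorem it extends canonically along the twistor family, and more generally along the relative moduli of polarized pairs. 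Following this bundle along a path connecting the moduli-theoretic pair to $(X, X')$ produces a projectively hyperholomorphic bundle on $X \times X'$ for any birational pair.

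\textbf{From a projective bundle to a twisted sheaf, and the equivalence.} The obstruction to lifting this projective bundle to a genuine vector bundle is a Brauer class $\alpha \in \mathrm{Br}(X \times X')$, and one obtains an $\alpha$-twisted coherent sheaf $\CP_{X, X'}$, already yielding the stronger twisted $D$-equivalence mentioned in the abstract. To recover the untwisted form, one tracks $\alpha$ through the construction and shows that it is the difference $p_1^* \alpha_1 - p_2^* \alpha_2$ of Brauer classes pulled back from the two factors, so that the resulting Fourier--Mukai functor descends to an equivalence $D^b(X) \simeq D^b(X')$. One then applies the Bondal--Orlov/Bridgeland criterion: by hyperholomorphicity and semicontinuity along the deformation, it suffices to verify the required spanning and orthogonality properties on the moduli-theoretic fiber, where the kernel is explicit and the needed $\Ext$ computations reduce to the moduli theory of stable sheaves on a $K3$ surface.

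\textbf{Main obstacle.} The hardest step is the construction and deformation of the projectively hyperholomorphic bundle itself. One must show that the initially constructed bundle on a moduli-theoretic pair really is projectively hyperholomorphic (requiring control of its Chern classes on the full twistor $SO(3)$-orbit), that its canonical deformation covers every birational $K3^{[n]}$-type pair rather than just pairs of moduli spaces, and that the Brauer class on $X \times X'$ has the product form required for an untwisted equivalence. Once these structural facts about hyperholomorphic bundles on products of $K3^{[n]}$-type varieties are in place, the verification that the Fourier--Mukai transform is an equivalence is relatively formal.
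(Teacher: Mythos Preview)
Your overall shape---build a projectively hyperholomorphic kernel on a model pair, then deform it along twistor families---is the right philosophy and is close to the paper's. But there is a genuine gap at precisely the step you flag as the ``main obstacle,'' and your sketch gives no mechanism to overcome it.

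The difficulty is this. Deformation of a projectively hyperholomorphic bundle from a model pair to a target pair runs inside a moduli space $\FM_\phi$ of Hodge isometries, and a quadruple $(X, \eta_X, X', \eta_{X'})$ lies in $\FM_\phi$ only when $\eta_{X'}^{-1} \circ \phi \circ \eta_X$ sends some K\"ahler class of $X$ to a K\"ahler class of $X'$. For birational non-isomorphic $X, X'$ the K\"ahler cones occupy distinct chambers of the positive cone, separated by a wall $\CW^\perp$ cut out by an algebraic MBM class $\CW$. Nothing in your proposal explains why the bundle can be deformed across this wall; the assertion that ``its canonical deformation covers every birational $K3^{[n]}$-type pair'' is exactly the content of the theorem, not an available input.

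The paper is organized around precisely this obstruction, and its solution is structurally different from yours. It does \emph{not} build a kernel on $X \times X'$ directly. Instead, after reducing to adjacent chambers separated by a single MBM wall $\CW$, it manufactures an auxiliary variety $Y''$ (not birational to $X$) such that both $(Y'', X)$ and $(Y'', X')$ lie in the same component $\FM_\phi^0$, and then composes the two resulting twisted equivalences $D^b(X, \alpha_X) \simeq D^b(Y'', \alpha_{Y''}) \simeq D^b(X', \alpha_{X'})$. The existence of such a common $Y''$ is forced by choosing the Mukai vector $v_0 = (r, mH, s)$ with large, carefully divisible entries so that $\phi^{-1}(\CW)$ is \emph{not} proportional to any MBM class on the $Y$-side; this uses the boundedness of MBM classes (Theorem~\ref{thm1.6}, Corollary~\ref{cor1.7}) together with an explicit divisibility computation via Buskin's reflection formula. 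The same arithmetic freedom in $v_0$ is used simultaneously to force the Brauer class on the $X$-side to be any prescribed $\alpha$ (in particular $\alpha = 0$ for Theorem~\ref{thm0.2}). Your proposal contains none of these ingredients: no reduction to adjacent walls, no MBM analysis, no arithmetic choice of $v_0$, and no factorization through an auxiliary $Y''$.

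A secondary issue: your ``initial reduction'' asserts that an arbitrary birational pair $(X, X')$ deforms, as a pair, to two moduli spaces $(M(v_1), M(v_2))$ on a single $K3$. This is not a consequence of Markman's monodromy results (which concern a single variety, not a birational pair), and if it were available it would essentially place you in Halpern-Leistner's setting and bypass the whole problem. The paper makes no such claim.
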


The $D$-equivalence conjecture has been proven by Bridgeland \cite{Bridgeland} for Calabi--Yau threefolds. For projective hyper-K\"ahler fourfolds, the $D$-equivalence conjecture holds by combining the classification results \cite{BHL, WW} and the case of Mukai flops by Kawamata \cite{K} and Namikawa \cite{Nam}. However, very few cases of this conjecture are known in dimension $>4$; see \cite{Ploog, ADM} for some partial results. Using equivalences obtained from window conditions, Halpern-Leistner~\cite{DHL} proved the $D$-equivalence conjecture for any hyper-K\"ahler variety which can be realized as a Bridgeland moduli space of stable objects on a (possibly twisted) $K3$ surface. Theorem \ref{thm0.2} generalizes Halpern-Leistner's result, but our construction of the derived equivalences is very different. We obtain explicit Fourier--Mukai kernels which rely on the theory of moduli spaces of hyper-K\"ahler manifolds and hyperholomorphic bundles \cite{Verb, Markman}; this is closer in spirit to the proposal of Huybrechts~\cite[Section 5.1]{H_ICM}. It would be interesting to find connections between the two approaches.

Our method in fact proves the following stronger, twisted version of the $D$-equivalence conjecture involving arbitrary Brauer classes. Let $X \dashrightarrow X'$ be a birational transform between hyper-K\"ahler varieties of ${K3}^{[n]}$-type. It naturally identifies the Brauer groups of $X,X'$: any Brauer class $\alpha \in \mathrm{Br}(X)$ induces a Brauer class $\alpha' \in \mathrm{Br}(X')$. 

\begin{thm}\label{thm0.3}
Let $X \dashrightarrow X'$ be as above, and let $\alpha$ be any Brauer class on $X$. Then there is an equivalence of bounded derived categories of twisted sheaves
\[
D^b(X, \alpha) \simeq D^b(X', \alpha').
\]    
\end{thm}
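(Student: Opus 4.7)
The plan is to enhance the Fourier--Mukai construction used for Theorem~\ref{thm0.2} so that it tracks Brauer twistings throughout. Recall that the untwisted kernel is produced from a projectively hyperholomorphic bundle---a $\BP$-bundle $\BP(\CE) \to X \times X'$ in the sense of Verbitsky--Markman. Any $\BP$-bundle has an obstruction class $\beta \in \mathrm{Br}(X \times X')$ measuring the failure to lift to an honest vector bundle, and there is a corresponding $\beta$-twisted coherent sheaf $\CE$ whose projectivisation is $\BP(\CE)$. This $\CE$ is the natural candidate for a twisted Fourier--Mukai kernel.

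The first step is to analyse $\beta$ explicitly. Since hyper-K\"ahler varieties of $K3^{[n]}$-type are simply connected with torsion-free integral cohomology, the K\"unneth formula decomposes $\mathrm{Br}(X \times X')$ cleanly into contributions from each factor together with a transcendental piece. I would argue that the class $\beta$ arising from Markman's construction has the form $p_1^*\gamma^{-1} \cdot p_2^*\gamma'$, where $(\gamma, \gamma') \in \mathrm{Br}(X) \times \mathrm{Br}(X')$ is a matched pair under the birational identification. This is where the hyperholomorphic hypothesis is essential: the Mukai vector of the kernel is constrained to act as a Hodge isometry agreeing with the birational transform on $H^2$, which forces the splitting and identifies the matched pair.

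For an arbitrary Brauer class $\alpha \in \mathrm{Br}(X)$ with image $\alpha'$ on $X'$, the plan is to repeat the construction after replacing the universal family used by Markman with one of $\alpha$-twisted sheaves of the appropriate Mukai vector. This should yield an $\alpha$-twisted projectively hyperholomorphic bundle on $X \times X'$ whose corresponding twisted kernel lies in $D^b(X \times X', p_1^*\alpha^{-1} \cdot p_2^*\alpha')$ and therefore defines a Fourier--Mukai functor
\[
D^b(X, \alpha) \to D^b(X', \alpha').
\]
That this functor is an equivalence should follow from the same cohomological argument as in Theorem~\ref{thm0.2}, applied now to the $\alpha$-twisted Mukai lattice. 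The main obstacle is extending the construction of projectively hyperholomorphic bundles of \cite{Verb, Markman} to the twisted setting: one needs $\alpha$-twisted universal families behaving well under deformation of both the hyper-K\"ahler variety and the Brauer class. However, since the analytic criterion for hyperholomorphicity is local and insensitive to gerbe structure, and since the theory of twisted sheaves on $K3$ surfaces supplies the required universal families, I expect no essentially new analytic input beyond what is already used for Theorem~\ref{thm0.2}.
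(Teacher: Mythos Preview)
Your proposal rests on a misunderstanding of how the paper proves even the untwisted case. Theorem~\ref{thm0.2} is not proved first and then ``enhanced''; it is a special case of Theorem~\ref{thm0.3}. More importantly, the paper never constructs a kernel on $X \times X'$ directly. The Fourier--Mukai kernel $\CU^{[n]}$ lives on $M^{[n]} \times S^{[n]}$ and deforms along twistor paths to a twisted bundle on $Y \times X$, where $Y$ is a hyper-K\"ahler variety related to $X$ by a rational Hodge isometry $\phi$, not by a birational map. The equivalence $D^b(X,\alpha) \simeq D^b(X',\alpha')$ is obtained only by showing that a \emph{single} bridge variety $Y''$ can be chosen so that both $(Y'',X)$ and $(Y'',X')$ lie in the same component $\FM^0_\phi$; one then composes the two resulting twisted equivalences through $D^b(Y'',\alpha_{Y''})$. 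The existence of such a $Y''$ is the crux of the argument and hinges on the condition~(\ref{wall_condition}): the wall $\CW$ separating the chambers of $X$ and $X'$ must not be sent by $\phi^{-1}$ to a class proportional to an MBM class on the $Y$ side. This requires the arithmetic construction of Section~\ref{section2} (Proposition~\ref{prop2.2}, the choice of $v_0$, and the final proposition using Corollary~\ref{cor1.7}), none of which appears in your sketch.

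The treatment of the Brauer class is also entirely different from what you propose. The paper does \emph{not} replace Markman's universal family by a family of $\alpha$-twisted sheaves. Instead, it exploits the fact that the deformation of $\CU^{[n]}$ along a twistor path already produces a \emph{twisted} bundle whose Brauer class is computed by Theorem~\ref{thm1.4}(b,c) as $[-c_1(\CU^{[n]})/\mathrm{rk}(\CU^{[n]})]$. The entire content of Section~\ref{section2} is to choose the $K3$ surface $S$ and the Mukai vector $v_0=(16gt^2d^4,\,\epsilon\cdot 4td^2H,\,\ast)$ so that, after parallel transport via $\rho$, this natural obstruction class equals the prescribed $\alpha = [-\CB/d]$ \emph{and} the wall condition~(\ref{wall_condition}) holds simultaneously. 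Your proposal to build an $\alpha$-twisted hyperholomorphic bundle from scratch would require extending Markman's BKR construction and the slope-stability/deformation theory to the twisted setting, which is neither carried out in \cite{Verb, Markman} nor sketched in your argument; the paper avoids this by absorbing the twist into the arithmetic of $v_0$.
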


Theorem \ref{thm0.3} specializes to Theorem \ref{thm0.2} by taking $\alpha = 0$.

\subsection*{Acknowledgements}
We are grateful to Daniel Huybrechts, Zhiyuan Li, Eyal Markman, Alex Perry, and Ziyu Zhang for helpful discussions, and to the anonymous referee for comments and suggestions. D.M.~was supported by a Simons Investigator Grant. J.S.~was supported by the NSF Grant DMS-2301474 and a Sloan Research Fellowship. R.Z.~was supported by the NKRD Program of China No.~2020YFA0713200 and LNMS.

\section{Moduli of Hodge isometries}

Assume $n\geq 2$. We denote by $\Lambda$ the $K3^{[n]}$-lattice, which is isometric to $H^2(X, \BZ)$, equipped with the Beauville--Bogomolov--Fujiki (BBF) form, for any hyper-K\"ahler manifold $X$ of $K3^{[n]}$-type.\footnote{When we say that $X$ is a hyper-K\"ahler manifold or a manifold of $K3^{[n]}$-type, it means that $X$ is not necessarily projective.} In particular, we have a decomposition
\[
\Lambda = \Lambda_{K3} \oplus \BZ \delta, \quad \delta^2 = 2-2n
\]
with $\Lambda_{K3}$ the unimodular $K3$ lattice, so that any vector $\omega \in \Lambda$ can be expressed uniquely as 
\[
\omega = \omega_{K3} + \lambda \delta, \quad \omega_{K3} \in \Lambda_{K3}, \quad \lambda \in \BZ.
\]
A marking $(X, \eta_X)$ for a manifold $X$ of $K3^{[n]}$-type is an isometry $\eta_X: H^2(X, \BZ) \xrightarrow{\simeq} \Lambda$.

\subsection{Inseparable pairs}\label{sect1.1}
We denote by $\FM_\Lambda$ the moduli space of marked manifolds $(X,\eta_X)$ of~$K3^{[n]}$-type; it is naturally a non-Hausdorff complex manifold whose non-separation illustrates the complexity of the birational/bimeromorphic geometry of hyper-K\"ahler varieties/manifolds~\cite{H1}.

We say that a pair $(X,\eta_X), (X', \eta_{X'})$ is \emph{inseparable} if they represent inseparable points on the moduli space $\FM_{\Lambda}$; as a consequence of the global Torelli theorem, this is equivalent to the condition that $(X, \eta_X), (X', \eta_{X'})$ share the same period and lie in the same connected component of $\FM_{\Lambda}$.

Typical examples of inseparable pairs are given by bimeromorphic transforms. More precisely, a bimeromorphic map $X \dashrightarrow X'$ induces a natural identification $H^2(X, \BZ) = H^2(X', \BZ)$ respecting the Hodge structures. A marking~$\eta_X$ for $X$ then induces a marking $\eta_{X'}$ for $X'$, and the pair $(X, \eta_X), (X',\eta_{X'})$ is therefore inseparable. Note that inseparable points are not necessarily induced by bimeromorphic transforms \emph{directly}. As an example, we consider bimeromorphic $X,X'$ as above and assume that
\[
\rho: H^2(X', \BZ) \to H^2(X, \BZ)
\]
is a parallel transport respecting the Hodge structures. Then the pair 
\[
(X, \eta_X), \quad (X', \eta_{X'}), \quad \eta_{X'}:= \eta_X \circ \rho
\]
is inseparable. By \cite{H1} (see also \cite[Section 3.1]{Torelli}), every inseparable pair arises this way.

\subsection{Hodge isometries}

We recall the moduli space of Hodge isometries; this was used by Buskin~\cite{Buskin} and Markman \cite{Markman} to construct algebraic cycles realizing rational Hodge isometries. 

For $\phi \in O(\Lambda_\BQ)$, we define $\FM_{\phi}$ to be the moduli space of isomorphism classes of quadruples $(X, \eta_X, Y, \eta_Y)$ where $(X, \eta_X), (Y, \eta_Y) \in \mathfrak{M}_{\Lambda}$ are the corresponding markings and 
\[
\eta_Y^{-1}\circ \phi \circ \eta_X: H^2(X, \BQ) \to H^2(Y,\BQ)
\]
is a Hodge isometry sending some K\"ahler class of $X$ to a K\"ahler class of $Y$. We have the natural forgetful maps
\begin{gather*}
\Pi_1:  \FM_{\phi} \to \FM_\Lambda, \quad (X,\eta_X, Y, \eta_Y) \mapsto (X, \eta_X), \\
\Pi_2:  \FM_{\phi} \to \FM_\Lambda, \quad (X,\eta_X, Y, \eta_Y) \mapsto (Y, \eta_Y).
\end{gather*}
Any connected component $\FM^0_\phi $ of $\FM_\phi$ maps to a connected component of $\FM_\Lambda$ via $\Pi_i$ which  we denote by $\FM^0_\Lambda$.

\begin{lem}[{\cite[Lemma 5.7]{Markman}}]\label{lem1.1}
    The maps $\Pi_i: \FM^0_\phi \to \FM^0_\Lambda$ ($i=1,2$) between connected components are surjective.
\end{lem}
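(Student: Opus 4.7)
I would prove surjectivity of $\Pi_1$; the argument for $\Pi_2$ is symmetric (replace $\phi$ by $\phi^{-1}$). The strategy is to show that the image $\Pi_1(\FM^0_\phi) \subset \FM^0_\Lambda$ is both open and closed. Since $\FM^0_\Lambda$ is connected by definition and $\Pi_1(\FM^0_\phi)$ is nonempty, this forces $\Pi_1(\FM^0_\phi) = \FM^0_\Lambda$.

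\textbf{Openness.} For $(X, \eta_X, Y, \eta_Y) \in \FM^0_\phi$, I would invoke local Torelli: the Kuranishi family of $(X, \eta_X)$ maps isomorphically (as a marked family) onto an open neighborhood of $p_X$ in the period domain $\Omega_\Lambda$, and similarly for $(Y, \eta_Y)$. Given a small deformation $(X_t, \eta_{X_t})$ with period $p_{X_t}$, the period $\phi_{\BC}(p_{X_t})$ determines, again via local Torelli, a unique matching deformation $(Y_t, \eta_{Y_t})$ of $(Y, \eta_Y)$. The composition $\eta_{Y_t}^{-1} \circ \phi \circ \eta_{X_t}$ remains a Hodge isometry by construction, and the condition of sending some K\"ahler class to a K\"ahler class is open (K\"ahler cones are open in $H^{1,1}(\,\cdot\,, \BR)$ and deform continuously in families). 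Hence $\Pi_1$ is a local biholomorphism onto its image.

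\textbf{Closedness.} For a limit point $(X_\infty, \eta_{X_\infty})$ of $\Pi_1(\FM^0_\phi)$ in $\FM^0_\Lambda$, I would apply Huybrechts' surjectivity of the period map on connected components of $\FM_\Lambda$ to find $(Y_\infty, \eta_{Y_\infty})$ with period $\phi_{\BC}(p_{X_\infty})$ in the connected component of $\FM_\Lambda$ determined by an approximating sequence $(X_n, \eta_{X_n}, Y_n, \eta_{Y_n}) \in \FM^0_\phi$. Then $g := \eta_{Y_\infty}^{-1} \circ \phi \circ \eta_{X_\infty}$ is a $\BQ$-Hodge isometry, and a continuity/orientation argument shows it carries the positive cone of $X_\infty$ into that of $Y_\infty$. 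To enforce the K\"ahler condition at the limit, I would apply Markman's chamber decomposition of the positive cone into K\"ahler cones of bimeromorphic models: replace $(Y_\infty, \eta_{Y_\infty})$ by the bimeromorphic model whose K\"ahler cone contains $g(\omega)$ for some $\omega \in \mathcal{K}(X_\infty)$, with $\eta_{Y_\infty}$ transported along the induced identification of second cohomology. Since the modified $Y_\infty$ is inseparable from the original, it stays in the same connected component of $\FM_\Lambda$, and combining this with the openness step identifies the modified quadruple with the limit of the approximating sequence in $\FM^0_\phi$.

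\textbf{Main obstacle.} The delicate point is the non-Hausdorff nature of $\FM_\Lambda$: many inseparable bimeromorphic models sit over the same period, and one must ensure that the bimeromorphic adjustment in the closedness step lands in the specific connected component $\FM^0_\phi$ and not in a different connected component of $\FM_\phi$. This is handled by approximating $(X_\infty, \eta_{X_\infty})$ by points in $\Pi_1(\FM^0_\phi)$ and using the local biholomorphism of Step 1 to continue the preimages to a well-defined limit, while tracking the orientation of the positive cone so that the K\"ahler condition survives in the limit chamber.
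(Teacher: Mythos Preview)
The paper does not give its own proof of this lemma; it is cited verbatim from \cite[Lemma~5.7]{Markman}. Markman's argument there is not an open--closed argument but a path-lifting one: he shows $\Pi_i$ is a local homeomorphism (your openness step), and then uses Verbitsky's theorem that any two points in a connected component $\FM^0_\Lambda$ are joined by a \emph{generic twistor path}. Each twistor line on $\FM^0_\Lambda$ lifts to a \emph{diagonal} twistor line on $\FM_\phi$ (choose compatible K\"ahler classes on $X$ and $Y$ identified by $\phi$ and deform both simultaneously), and since the nodes of a generic twistor path have trivial Picard group, the lift at each node is uniquely determined by global Torelli. Chaining these lifts produces a path in $\FM^0_\phi$ ending over the target point. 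This is exactly the mechanism the present paper exploits immediately afterwards in the proof of Lemma~\ref{lem1.2} and in Theorem~\ref{thm1.4}.

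Your closedness step has a genuine gap. Having built a candidate quadruple $(X_\infty,\eta_{X_\infty},Y_\infty,\eta_{Y_\infty})\in\FM_\phi$, you need it to lie in the \emph{specific} component $\FM^0_\phi$. Your proposed fix---use the local biholomorphism near this quadruple and compare with the approximating preimages $(X_n,\eta_{X_n},Y_n,\eta_{Y_n})$---does not close: the local section through your constructed quadruple produces some $(X_n,\eta_{X_n},Y'_n,\eta_{Y'_n})$, but nothing forces $Y'_n=Y_n$; they merely share the same period and connected component of $\FM_\Lambda$, which in the non-Hausdorff setting allows distinct inseparable models. The repair is precisely the genericity trick Markman uses: arrange the approximating $(X_n,\eta_{X_n})$ to have $\mathrm{Pic}(X_n)=0$, so that $\mathrm{Pic}(Y_n)=0$ as well and global Torelli makes $Y_n$ (hence the whole quadruple) unique over $X_n$. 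Once you insert that, your argument essentially converges to Markman's twistor-path proof; as written, the closedness step is incomplete.
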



\begin{lem}\label{lem1.2}
Assume that the point $(X, \eta_X, Y,\eta_Y)$ lies in a connected component $\FM^0_\phi$. Assume further that $(X, \eta_X), (X',\eta_{X'})$ form an inseparable pair such that
\begin{equation}\label{point}
(X' ,\eta_{X'}, Y, \eta_Y) \in \FM_\phi.
\end{equation}
Then $(X' ,\eta_{X'}, Y, \eta_Y)$ lies in the same component $\FM^0_\phi$.
\end{lem}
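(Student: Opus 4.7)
The plan is to show that $(X', \eta_{X'}, Y, \eta_Y)$ lies in the same connected component as $(X, \eta_X, Y, \eta_Y)$ by approaching it along a sequence of nearby points in $\FM^0_\phi$; since connected components of the complex manifold $\FM_\phi$ are closed, this will suffice.

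The first step is to establish that the forgetful map $\Pi_1: \FM_\phi \to \FM_\Lambda$ is a local biholomorphism at every point. By local Torelli, the period map $\FM_\Lambda \to \Omega_\Lambda$ is a local isomorphism. Given a point $(X, \eta_X, Y, \eta_Y) \in \FM_\phi$ and a small deformation $(X_t, \eta_{X_t})$ of $(X,\eta_X)$ with period $p_{X_t}$, the Hodge isometry condition forces any lift to have $Y$-period equal to $\phi(p_{X_t})$, and local Torelli at $(Y, \eta_Y)$ produces a unique marked manifold $(Y_t, \eta_{Y_t})$ nearby with this period. Openness of the K\"ahler condition ensures this lift indeed lies in $\FM_\phi$. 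Thus, locally, $\FM_\phi$ is the graph of a holomorphic map from a neighborhood of $(X, \eta_X)$ to a neighborhood of $(Y, \eta_Y)$.

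Next, I use the inseparability of $(X, \eta_X)$ and $(X', \eta_{X'})$ to extract a sequence $(X_n, \eta_{X_n}) \in \FM_\Lambda$ that converges to both. Applying the local biholomorphism property of $\Pi_1$ near $(X, \eta_X, Y, \eta_Y)$, for $n$ large I lift $(X_n, \eta_{X_n})$ to a unique quadruple
\[
z_n = (X_n, \eta_{X_n}, Y_n, \eta_{Y_n}) \in \FM^0_\phi, \qquad z_n \to (X, \eta_X, Y, \eta_Y).
\]
Applying the same property at the point $(X', \eta_{X'}, Y, \eta_Y) \in \FM_\phi$ (which lies in \emph{some} component of $\FM_\phi$), I lift $(X_n, \eta_{X_n})$ near this point as well:
\[
z'_n = (X_n, \eta_{X_n}, Y'_n, \eta_{Y'_n}) \in \FM_\phi, \qquad z'_n \to (X', \eta_{X'}, Y, \eta_Y).
\]
The crucial observation is that $z_n = z'_n$ for $n$ large. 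Indeed, $(Y_n, \eta_{Y_n})$ and $(Y'_n, \eta_{Y'_n})$ are both marked manifolds in a small neighborhood of $(Y, \eta_Y)$ sharing the common period $\phi \circ \eta_{X_n}(H^{2,0}(X_n))$, so by local Torelli at $(Y, \eta_Y)$ they coincide. Consequently $z'_n \in \FM^0_\phi$ for $n$ large, and since $\FM^0_\phi$ is closed, its limit $(X', \eta_{X'}, Y, \eta_Y)$ belongs to $\FM^0_\phi$.

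The main technical point, and the only place where something nontrivial is used, is the identification $z_n = z'_n$: one must know that two marked HK manifolds with the same period sitting in a sufficiently small neighborhood of a common point are equal. This is precisely local Torelli applied on the $Y$-side, together with the fact that $\FM_\phi$ sits inside $\FM_\Lambda \times \FM_\Lambda$ as a set (no extra structure on the fibers of $\Pi_1$). Everything else is a routine combination of inseparability with the continuity/holomorphicity of the $Y$-coordinate as a function of the $X$-coordinate.
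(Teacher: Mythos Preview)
Your proof is correct, but it is genuinely different from the paper's. The paper does not argue locally near the inseparable pair. Instead it uses path-lifting (Lemma~\ref{lem1.1}) to transport both quadruples along paths in $\FM_\Lambda$ to a common marked manifold $(X_0,\eta_{X_0})$ with $\mathrm{Pic}(X_0)=0$, obtaining endpoints $(X_0,\eta_{X_0},Y_0,\eta_{Y_0})$ and $(X_0,\eta_{X_0},Y'_0,\eta_{Y'_0})$. These share the same period on the $Y$-side, lie in the same component of $\FM_\Lambda$ under $\Pi_2$, and have trivial Picard group; the \emph{global} Torelli theorem then forces $(Y_0,\eta_{Y_0})=(Y'_0,\eta_{Y'_0})$, so the two original quadruples are path-connected in~$\FM_\phi$.

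Your argument trades this global statement for the purely local one: you stay in a neighborhood of $(Y,\eta_Y)$ and use only local Torelli to identify the second coordinates of the two lifted sequences. This avoids Lemma~\ref{lem1.1} and the global Torelli theorem entirely, and is in that sense more elementary. The paper's route, on the other hand, reuses the path-lifting machinery already in place and makes the argument uniform with the twistor-path techniques used later. One small point worth making explicit in your write-up: the existence of a single sequence $(X_n,\eta_{X_n})$ converging to both inseparable points relies on $\FM_\Lambda$ being first countable (which it is, being a complex manifold); and the openness of the K\"ahler-cone condition you invoke is exactly what underlies the local-biholomorphism statement for $\Pi_1$ in Markman's setup.
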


Note that (\ref{point}) is equivalent to the condition that $\eta_Y^{-1} \circ \phi \circ \eta_{X'}$ sends some K\"ahler class of~$X'$ to a K\"ahler class of $Y$. 

\begin{proof}
Both $(X, \eta_X), (X',\eta_{X'})$ lie in the same connected component of $\FM_\Lambda$ which we call $\mathfrak{M}^0_\Lambda$. We first find paths in $\FM^0_\Lambda$ connecting both points to $(X_0, \eta_{X_0}) \in \FM^0_\Lambda$ with $\mathrm{Pic}(X_0) =0$. By Lemma \ref{lem1.1}, we can lift these paths to $\FM_\phi$, which connect $(X, \eta_X, Y,\eta_Y)$ to $(X_0, \eta_{X_0}, Y_0, \eta_{Y_0})$, and $(X' ,\eta_{X'}, Y, \eta_Y)$ to $(X_0, \eta_{X_0}, Y'_0, \eta_{Y'_0})$ respectively. On one hand, by considering the projection $\Pi_2$, we know that the two points $(Y_0,\eta_{Y_0}), (Y'_0,\eta_{Y'_0})$ lie in the same connected component of $\FM_\Lambda$; on the other hand, the Hodge isometry condition ensures that both of them share the same period \cite[Lemma 5.4]{Markman} and they have trivial Picard group. By the global Torelli theorem, we must have $(Y_0, \eta_{Y_0}) = (Y'_0 ,\eta_{Y'_0})$. This completes the proof.
\end{proof}

Suppose we are given a point $(X, \eta_X, Y, \eta_Y)$ in $\FM_\phi$, and K\"ahler classes $\omega_X, \omega_Y$ on~$X, Y$ which are identified via $\eta_Y^{-1}\circ \phi \circ \eta_X$.
Using this data, one can define a \emph{diagonal twistor line}~$\ell \subset {\FM}_\phi$ which lifts
the twistor lines associated to $(X, \omega_X)$ and $(Y, \omega_Y)$
on $\mathfrak{M}_\Lambda$. A \emph{generic diagonal twistor path} 
on $\FM_\phi$ is given by a chain of diagonal twistor lines such that, at each node of the chain, the associated hyper-K\"ahler manifolds have trivial Picard group. Generic diagonal twistor paths are used in Theorem \ref{thm1.4} below to deform certain Fourier--Mukai kernels.

\subsection{Brauer groups}

Assume that $X$ is a manifold of $K3^{[n]}$-type. Since $X$ has no odd cohomology, the discussion in \cite[Section 4.1]{Ca} yields the following explicit description of the (cohomological) Brauer group:
\begin{equation}\label{Br}
\mathrm{Br}(X) = \left( H^2(X, \BZ) / \mathrm{Pic}(X)\right) \otimes \BQ/\BZ.
\end{equation}
In particular, given a bimeromorphic map $X \dashrightarrow X'$ between manifolds of $K3^{[n]}$-type, there is a natural identification 
\[
\mathrm{Br}(X) = \mathrm{Br}(X')
\]
since both $H^2(-,\BZ)$ and $\mathrm{Pic}(-)$ are identified for $X$ and $X'$. The description (\ref{Br}) also allows us to present a Brauer class in the form
\begin{equation}\label{B_Field}
\left[ \frac{\beta}{d}\right] \in \mathrm{Br}(X), \quad \beta\in H^2(X,\BZ), \quad d\in \BZ_{> 0};
\end{equation}
this is referred to as the ``$B$-field''.


We note that the cohomology $H^2(X, \BZ)$ forms a trivial local system over any connected component of the moduli space $\FM^0_{\Lambda}$; therefore (\ref{B_Field}) for a single $X$ presents a Brauer class for any point in the component $\FM^0_{\Lambda}$ containing $(X, \eta_X)$. 

\subsection{Projectively hyperholomorphic bundles}\label{Section1.3}
Using the Bridgeland--King--Reid (BKR) correspondence \cite{BKR}, Markman constructed in \cite{Markman} a class of projectively hyperholomorphic bundles which we recall here. We consider a projective $K3$ surface $S$ with $\mathrm{Pic}(S)=\BZ H$. Assume that $r,s$ are two coprime integers with $r\geq 2$. Assume further that the Mukai vector
\[
v_0:=(r, mH, s) \in H^*(S, \BZ)
\]
is isotropic, \emph{i.e.}~$v_0^2=0$, and that all stable sheaves on $S$ with Mukai vector $v_0$ are stable vector bundles.\footnote{In \cite{Markman}, Markman only considered the case $m=1$; here considering large $\pm m$ is crucial for our purpose. Using \cite[Proposition 2.2]{H2} (see also \cite[Theorem 2.2]{Y}), Markman's argument works identically in this generality.} Let $M$ be the moduli space of such stable vector bundles. Then $M$ is again a $K3$ surface, and the coprime condition of $r, s$ ensures the existence of a universal rank $r$ bundle $\CU$ on $M \times S$. Conjugating the BKR correspondence, we obtain a vector bundle $\CU^{[n]}$ on $M^{[n]} \times S^{[n]}$ of rank
\[
\mathrm{rk}(\CU^{[n]}) = n!r^n;
\]
see \cite[Lemma 7.1]{Markman}. This vector bundle induces a derived equivalence
\begin{equation}\label{Deq}
\Phi_{\CU^{[n]}}: D^b(M^{[n]}) \xrightarrow{\simeq} D^b(S^{[n]}).
\end{equation}
Markman further showed in \cite[Section 5.6]{Markman} that the characteristic class of $\CU^{[n]}$ induces a Hodge isometry
\[
\phi_{\CU^{[n]}}: H^2(M^{[n]}, \BQ) \to H^2(S^{[n]}, \BQ). 
\]
Under the natural identification
\begin{equation}\label{Hilb}
H^2(M^{[n]}, \BQ) = H^2(M, \BQ)\oplus \BQ \delta, \quad H^2(S^{[n]}, \BQ) = H^2(S, \BQ)\oplus \BQ  \delta,
\end{equation}
this Hodge isometry is of the form
\[
\phi_{\CU^{[n]}} =  (\phi_\CU, \mathrm{id}), \quad \phi_\CU : H^2(M, \BQ) \to H^2(S, \BQ),
\]
where $\phi_\CU$ is the Hodge isometry of $K3$ surfaces induced by $\CU$; see \cite[Corollary 7.3]{Markman}.

The key results, which are summarized in the following theorem, show that the Fourier--Mukai kernel $\CU^{[n]}$, as a projectively hyperholomorphic bundle, deforms along generic diagonal twistor paths. Moreover, at each point of the path, it induces a (twisted) derived equivalence:

\begin{thm}[\cite{Markman, Polish}]\label{thm1.4}
There exist markings $\eta_{M^{[n]}} ,\eta_{S^{[n]}}$ for the Hilbert schemes $M^{[n]}, S^{[n]}$ respectively, which induce $\phi \in O(\Lambda_\mathbb{Q})$ via $\phi_{\CU^{[n]}}$, such that the connected component containing the quadruple 
\[
(M^{[n]}, \eta_{M^{[n]}}, S^{[n]}, \eta_{S^{[n]}}) \in \FM^0_{\phi}
\]
satisfies the following:
\begin{enumerate}
    \item[(a)] For every point $(X, \eta_X, Y, \eta_Y)$ lying in the component $\FM^0_{\phi}$, there exists a twisted vector bundle $(\CE, \alpha_\CE)$ on $X \times Y$, which is deformed from $\CU^{[n]}$ along a generic diagonal twistor path.
    \item[(b)] Using the form (\ref{B_Field}), the Brauer class in (a) is presented by 
    \[
    \alpha_\CE = \left[- \frac{c_1(\CU^{[n]})}{\mathrm{rk}(\CU^{[n]})} \right]. 
    \]
    Here we view $H^2(X\times Y, \BZ) = H^2(X, \BZ) \oplus H^2(Y, \BZ)$ as a trivial local system over the moduli space $\FM^0_{\phi}$ via the markings.
    \item[(c)] Further assume that $X, Y$ are varieties. Then the twisted bundle $(\CE, \alpha_\CE)$ induces an equivalence of twisted derived categories
    \[
    \Phi_{(\CE, \alpha_\CE)}: D^b(X, \alpha_X) \xrightarrow{\simeq} D^b(Y, \alpha_Y), \quad \alpha_X = \left[ \frac{a_X}{\mathrm{rk}(\CE)}\right], \quad \alpha_Y = \left[-\frac{a_Y}{\mathrm{rk}(\CE)} \right],
    \]
    where $a_X \in H^2(X, \BZ), a_Y \in H^2(Y, \BZ)$ are given by 
    \[
    c_1(\CU^{[n]})  = a_X + a_Y \in H^2(X, \BZ) \oplus H^2(Y, \BZ).
    \]
\end{enumerate}
\end{thm}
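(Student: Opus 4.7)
The plan is to combine two ingredients: Verbitsky--Markman's deformation theory for projectively hyperholomorphic bundles along twistor lines, and Polishchuk's extension of Fourier--Mukai equivalences along such deformations. The starting quadruple determines a distinguished connected component $\FM^0_\phi$; I would prove (a)--(c) by propagating $\CU^{[n]}$ from the base point to an arbitrary point of this component via a generic diagonal twistor path and checking the relevant properties at each node of the chain.

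First I would verify that $\CU^{[n]}$ is projectively hyperholomorphic on $M^{[n]} \times S^{[n]}$ in Verbitsky's sense, i.e.~that the rational characteristic classes of $\mathrm{End}(\CU^{[n]})$ are of Hodge type $(p,p)$ on every hyper-K\"ahler rotation. On the $K3$ factor, this is automatic for the universal bundle $\CU$ because $H^{1,1}$ of a $K3$ is spanned by Hodge classes after any hyper-K\"ahler rotation; Markman's construction via BKR transfers this to $\CU^{[n]}$. Verbitsky's theorem then guarantees that the associated projective bundle $\BP(\CU^{[n]})$ deforms canonically along every twistor line on $M^{[n]} \times S^{[n]}$. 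Lifting this projective deformation to a coherent twisted bundle $(\CE, \alpha_\CE)$ produces the $B$-field, and a direct comparison of first Chern classes identifies $\alpha_\CE$ with $[-c_1(\CU^{[n]})/\mathrm{rk}(\CU^{[n]})]$; this gives (b) at the initial point. To reach an arbitrary quadruple in $\FM^0_\phi$, I would chain together twistor lines through points with trivial Picard group. Lemma \ref{lem1.1} (together with Lemma \ref{lem1.2}) ensures that the diagonal lifts of these paths stay inside $\FM^0_\phi$, and at each node the local system $H^2(X \times Y, \BZ)$ is canonically trivialized by the markings, so the Brauer class transports as stated. Iterating gives (a) and (b) at every point.

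For (c), when $X$ and $Y$ are projective I would invoke Polishchuk's result that deforming a Fourier--Mukai kernel along a hyperholomorphic family preserves the equivalence property, provided one can deform the kernel together with a putative quasi-inverse. The inverse kernel of $\Phi_{\CU^{[n]}}$ (a shifted dual of $\CU^{[n]}$) is also projectively hyperholomorphic, so it admits a parallel twistor deformation; the adjunction unit and counit are parallel under the same flat connection, and their vanishing away from a closed analytic locus propagates by openness/semicontinuity to show they remain isomorphisms at $(X, \eta_X, Y, \eta_Y)$. Taking their cones then shows $\Phi_{(\CE, \alpha_\CE)}$ is an equivalence; the explicit form of the Brauer classes $\alpha_X, \alpha_Y$ follows from the decomposition of $c_1(\CU^{[n]})$ along the two factors after deformation.

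The main obstacle, and the reason the theorem relies heavily on \cite{Markman, Polish}, is the global nature of the deformation: one must show that the projectively hyperholomorphic structure on $\CU^{[n]}$ actually extends along \emph{generic} diagonal twistor paths rather than merely a single twistor line, and that the resulting twisted bundle is well defined up to isomorphism of its projectivization, independent of the chosen path. This requires both Verbitsky's unobstructedness for hyperholomorphic sheaves in the non-algebraic setting and a careful treatment of the Brauer class on the non-K\"ahler nodes of a twistor path, where standard Fourier--Mukai arguments do not apply directly. Granting these inputs from \cite{Markman, Polish}, the assembly described above yields (a), (b), and (c).
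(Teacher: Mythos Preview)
Your outline for (a) and (b) is close to the paper's: Markman shows $\CU^{[n]}$ is projectively slope-stable hyperholomorphic, hence deforms along diagonal twistor paths, and the Brauer class is read off via C\u{a}ld\u{a}raru's description of obstructions to lifting projective bundles. Two small corrections: the reference \cite{Polish} is to Kapustka--Kapustka, not Polishchuk; and Lemmas~\ref{lem1.1}--\ref{lem1.2} are not what produce generic diagonal twistor paths inside $\FM^0_\phi$---that is a separate input from \cite{Markman}.

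The substantive discrepancy is in (c). You propose to deform the inverse kernel alongside $\CU^{[n]}$ and argue that the adjunction unit and counit remain isomorphisms by ``openness/semicontinuity.'' This is precisely where a na\"ive argument breaks down: the twistor family is not projective (most fibers are not even algebraic), so the usual semicontinuity theorems for cohomology and for loci where a morphism of complexes is a quasi-isomorphism are unavailable. You flag this as the ``main obstacle'' but then defer to the references without saying how it is overcome.

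The paper's argument (following \cite{Polish}) bypasses this entirely. Instead of tracking adjunction maps, one invokes C\u{a}ld\u{a}raru's criterion \cite[Theorem~3.2.1]{Ca0}, which characterizes when a (twisted) sheaf on a product induces a derived equivalence purely in terms of cohomological conditions on the sheaf itself. The key input is then Verbitsky's theorem \cite[Corollary~8.1]{Verb2} that the cohomology of a slope-polystable hyperholomorphic bundle is \emph{exactly invariant} under hyper-K\"ahler rotations---not merely semicontinuous. Thus the cohomological conditions hold at every point of a generic diagonal twistor path as soon as they hold at the starting point $(M^{[n]},S^{[n]})$, where they follow from the original BKR equivalence. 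This invariance, rather than any openness argument, is what makes the proof go through at non-algebraic fibers.
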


\begin{proof}
(a) was proven in \cite[Theorem 8.4]{Markman}; Markman showed that $\CU^{[n]}$ on $M^{[n]}\times S^{[n]}$ is projectively slope-stable hyperholomorphic in the sense of \cite{Verb, BB} which allows him to deform it along diagonal twistor paths to all points in the component $\FM^0_{\phi}$.


(b) can be obtained by applying C\u{a}ld\u{a}raru's result \cite[Theorem 4.1]{Ca} along the diagonal twistor paths; see the discussion in \cite[Section 2.3]{Polish}.

(c) was proven in \cite[Theorem 2.3]{Polish}. More precisely, the condition that a twisted bundle induces a twisted derived equivalence can be characterized by cohomological properties \cite[Theorem~3.2.1]{Ca0}. These properties are preserved along a twistor path due to the fact that the cohomology of slope-polystable hyperholomorphic bundles is invariant under hyper-K\"ahler rotations \cite[Corollary 8.1]{Verb2}. Therefore we ultimately reduce the desired cohomological properties to those for $M^{[n]} \times S^{[n]}$ which are given by the original equivalence (\ref{Deq}).
\end{proof}

\subsection{Birational geometry and MBM classes}
The birational geometry of hyper-K\"ahler varieties is governed by certain integral \emph{primitive} cohomology classes, called the monodromy birationally minimal (MBM) classes. We refer to \cite{AVsurvey} for an introduction to these classes. In the following, we summarize some results which are needed in our proof.

Let $X$ be a variety of $K3^{[n]}$-type. We consider its birational K\"ahler cone $\mathcal{BK}_X$ and the positive cone $\mathcal{C}_X$:
\[
\mathcal{BK}_X \subset \mathcal{C}_X  \subset H^{1,1}(X, \BR).
\]
The positive cone is convex and admits a wall-and-chamber structure. The closure of the birational K\"ahler cone within the positive cone is a convex sub-cone \cite{H_cone}, which inherits a wall-and-chamber structure. Furthermore, by a result of Amerik--Verbitsky \cite{AV}, and independently Mongardi \cite{Mon}, all the walls are governed by the MBM classes.

\begin{thm}[\cite{Mon, AV}]\label{thm1.5}
    Any wall of $\mathcal{C}_X$ is described by a hyperplane of the form 
    \[
    \CW^{\perp}: = \{\omega \in H^{1,1}(X,\BR), \, (\omega, \CW) = 0\}\subset H^{1,1}(X, \BR)
    \]
    with $\CW$ an algebraic MBM class in $\mathrm{Pic}(X)$. Here the pairing is with respect to the BBF form. Moreover, any chamber in $\mathcal{C}_X$ can be realized as the K\"ahler cone of a birational hyper-K\"ahler~$X'$ through a parallel transport $\rho: H^2(X', \BZ) \to H^2(X, \BZ)$ respecting the Hodge structures.
\end{thm}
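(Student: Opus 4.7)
The plan is to address the two assertions separately, following Mongardi \cite{Mon} and Amerik--Verbitsky \cite{AV}.

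For the first assertion, I would first observe that any wall of $\mathcal{C}_X$ (in the wall-and-chamber decomposition inherited from the closure of $\mathcal{BK}_X$) must be a rational hyperplane of the form $\CW^\perp$ for some primitive integral $(1,1)$-class $\CW$. For this hyperplane to intersect $\mathcal{C}_X$ nontrivially, the Hodge index theorem applied to the BBF form on $H^{1,1}(X, \BR)$ forces $(\CW, \CW) < 0$, so that $\CW$ is automatically algebraic, i.e., lies in $\mathrm{Pic}(X)$. The remaining content is to identify $\CW$ as monodromy birationally minimal. I would argue by deformation: inside the Noether--Lefschetz locus of the deformation space of $X$ where $\CW$ remains of type $(1,1)$, a very general deformation $X_t$ satisfies $\mathrm{Pic}(X_t) = \BZ \CW_t$, and the wall structure on $\mathcal{BK}_{X_t}$ is then entirely governed by $\CW_t$; by Markman's monodromy invariance, this wall structure transports back to $X$, exhibiting $\CW$ as MBM. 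Conversely, given an algebraic MBM class one specializes to a generic rank-one deformation and invokes the Amerik--Verbitsky ergodic-theoretic construction to produce the extremal birational contraction witnessing the wall.

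For the second assertion, given a chamber $C \subset \mathcal{BK}_X$, I would pick a very general rational class $\omega \in C$ and construct an inseparable marked pair $(X, \eta_X), (X', \eta_{X'})$ in $\FM_\Lambda$ such that $\omega$ corresponds to a K\"ahler class on $X'$. By the global Torelli theorem in the formulation of Huybrechts \cite{H1}, the fiber over the period of $(X, \eta_X)$ within a connected component of $\FM_\Lambda$ is parametrized by the chambers of $\mathcal{BK}_X$, with each chamber realized by a unique marked pair whose K\"ahler cone equals that chamber. Thus the desired $(X', \eta_{X'})$ exists, and the induced parallel transport $\rho \colon H^2(X', \BZ) \to H^2(X, \BZ)$ is then a Hodge-theoretic isometry identifying the K\"ahler cone of $X'$ with $C$.

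The hardest step will be the MBM-implies-wall direction in the first assertion: producing genuine birational geometry from purely lattice-theoretic data requires the full strength of the Amerik--Verbitsky ergodicity argument for the monodromy action on the period domain, together with the deformation theory of hyper-K\"ahler manifolds. The second assertion is conceptually cleaner but relies on a careful interplay between the chamber structure and the non-Hausdorff topology of $\FM_\Lambda$ through the formalism of inseparable pairs from Section~\ref{sect1.1}.
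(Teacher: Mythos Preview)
The paper does not prove Theorem~\ref{thm1.5}; it is quoted from the literature with attribution to Mongardi~\cite{Mon} and Amerik--Verbitsky~\cite{AV} and used as a black box in the subsequent argument. There is therefore no proof in the paper to compare your proposal against.

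As a brief comment on the sketch itself: the overall shape is in line with the cited references, but two points deserve care. In the first assertion, the inference ``$(\CW,\CW)<0$, so that $\CW$ is automatically algebraic'' is misphrased: for a hyper-K\"ahler manifold every integral $(1,1)$-class already lies in $\mathrm{Pic}(X)$, independent of its square; the negativity of $\CW^2$ is relevant only for the MBM characterization (and for Theorem~\ref{thm1.6}). In the second assertion, you work with chambers $C \subset \mathcal{BK}_X$, but the statement concerns all chambers of the full positive cone $\mathcal{C}_X$; chambers outside $\mathcal{BK}_X$ are not realized by birational maps but only by Hodge-preserving parallel transports, exactly as the paper explains in the paragraph following the theorem and in Section~\ref{sect1.1}. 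Your appeal to global Torelli and inseparable pairs is the right mechanism, but the conclusion should be phrased for $\mathcal{C}_X$, not $\mathcal{BK}_X$.
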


Note that any chamber in $\mathcal{BK}_X \subset \mathcal{C}_X$ is given by the pullback of the K\"ahler cone via a birational transform $X \dashrightarrow X'$ of hyper-K\"ahler varieties. By the discussions of Section \ref{sect1.1}, any chamber of $\mathcal{C}_X$ corresponds to a marked variety $(X' ,\eta_{X'})$ of $K3^{[n]}$-type such that the pair $(X, \eta_X), (X', \eta_{X'})$ is inseparable.

We also need the following boundedness result, which notably implies that wall-and-chamber structure of $\mathcal{C}_X$ is locally polyhedral; see \cite[Remark~8.2.3]{HK3} for a proof of the implication. The boundedness result was essentially obtained by \cite{BHT}, as explained in~\cite[Section 6.2]{AV}.

\begin{thm}[\cite{BHT, AV}]\label{thm1.6}
There is a constant $C_0 >0$, such that for any variety $X$ of $K3^{[n]}$-type and any MBM class $\CW \in H^2(X, \BZ)$ we have
\[
0 < -\CW^2 < C_0. 
\]
Here the norm is with respect to the BBF form.  
\end{thm}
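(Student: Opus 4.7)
The plan is to reduce the bound $-\CW^2 < C_0$ to the lattice-theoretic classification of walls on Bridgeland moduli spaces of stable objects on $K3$ surfaces. Both the MBM property and the BBF-square $\CW^2$ are invariants of the monodromy orbit of the pair $(X, \CW)$: the former by definition, the latter because parallel transports are lattice isometries. Every connected component of $\FM_\Lambda$ contains a marked point of the form $(M_\sigma(v), \eta)$, where $M_\sigma(v)$ is a moduli space of $\sigma$-stable objects in $D^b(S, \alpha)$ for a (possibly twisted) projective $K3$ surface $S$, a primitive algebraic Mukai vector $v$ with $v^2 = 2n-2$, and a $v$-generic Bridgeland stability condition $\sigma$. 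Hence it suffices to prove the bound on such moduli spaces.

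On $M_\sigma(v)$ the Mukai morphism identifies $H^2(M_\sigma(v), \BZ)$ with the sublattice $v^\perp \subset H^*(S, \BZ)$. By the Bayer--Macr\`i projectivity criterion combined with \cite{BHT}, a primitive class $\CW \in v^\perp$ is MBM if and only if there exists a primitive hyperbolic rank-$2$ sublattice $L = \BZ v + \BZ w \subset H^*(S, \BZ)_{\mathrm{alg}}$ such that $\CW$ is the sign-normalized primitive generator of $L \cap v^\perp$, subject to arithmetic constraints that cut out the genuine flopping and divisorial walls (of Brill--Noether, Hilbert--Chow, and Li--Gieseker--Uhlenbeck type) from among all formal ones. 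A direct computation in $L$ gives
\[
-\CW^2 \;=\; \frac{v^2 \bigl((v,w)^2 - v^2 w^2\bigr)}{d^2},
\]
where $d$ is the divisibility in $v^\perp$ of $v^2 w - (v,w) v$. The wall-type classification of \cite{BHT} constrains $(v,w)^2 - v^2 w^2$ to lie in a finite set of values determined by $v^2 = 2n-2$, yielding a uniform upper bound $-\CW^2 \leq C_0$ depending only on $n$.

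The main obstacle is the reduction in the first step: one must verify that the MBM property deforms correctly along K\"ahler-generic twistor paths, so that an MBM class on an arbitrary (possibly non-projective) variety of $K3^{[n]}$-type pulls back to an MBM class on a moduli-type projective deformation without changing its BBF-square. This is supplied by the monodromy-equivariant formulation of the wall theorem of Amerik--Verbitsky \cite{AV} and Mongardi \cite{Mon}, which characterizes MBM-ness by the existence of a birational wall on some K\"ahler-generic projective deformation in the same component of $\FM_\Lambda$; see also \cite[Section 6.2]{AV}.
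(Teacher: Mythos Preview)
The paper does not give its own proof of this theorem: it is stated with attribution to \cite{BHT,AV}, and the only comment is that ``the boundedness result was essentially obtained by \cite{BHT}, as explained in \cite[Section~6.2]{AV}.'' Your sketch is a faithful outline of exactly that argument: deform to a Bridgeland moduli space $M_\sigma(v)$ on a (twisted) $K3$, identify $H^2$ with $v^\perp$ via the Mukai map, and then invoke the Bayer--Macr\`i/\cite{BHT} lattice classification of walls to bound the discriminant of the rank-$2$ sublattice and hence $-\CW^2$. So there is nothing to compare; you have supplied what the paper only cites.

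Two small points of precision. First, in your displayed formula the integer $d$ should be the \emph{content} of $v^2w-(v,w)v$ in $v^\perp$ (the positive integer by which one divides to obtain a primitive vector), not its divisibility in the sense the paper uses elsewhere (the positive generator of the image of the pairing map); these differ in non-unimodular lattices such as $v^\perp$. Second, the sentence ``constrains $(v,w)^2-v^2w^2$ to lie in a finite set of values determined by $v^2=2n-2$'' is slightly misleading: what \cite{BHT} actually bounds is a combination involving the divisibility as well, and the cleanest way to state the output is directly as a bound on $-\CW^2$ (for $K3^{[n]}$-type one gets the explicit range $0<-\CW^2\le 2n+6$ from the Bayer--Macr\`i wall analysis together with \cite{BHT}). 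Neither point affects the correctness of the strategy.
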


For any rational Hodge isometry $\phi: H^2(X,\BQ) \to H^2(Y,\BQ)$ between varieties of~$K3^{[n]}$-type, which sends an MBM class $\CW_X$ on $X$ to a class proportional to an MBM class~$\CW_Y$ on~$Y$, there exist coprime integers $a,b$ such that
\[
\phi(\CW_X) = \frac{a}{b} \CW_Y.
\]
The following is an immediate consequence of Theorem \ref{thm1.6}.

\begin{cor}\label{cor1.7}
    For any $X,Y, \phi, \CW_X, \CW_Y$ as above, we have
    \[
    a^2 < C_0, \quad b^2 < C_0.
    \]
\end{cor}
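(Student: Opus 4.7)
The plan is to reduce the statement to an elementary divisibility argument built on top of Theorem \ref{thm1.6}. The key observation is that $\phi$ is an isometry (with respect to the BBF form extended rationally), so applying it to the relation $\phi(\CW_X) = (a/b)\CW_Y$ and squaring gives
\[
\CW_X^2 \;=\; \phi(\CW_X)^2 \;=\; \frac{a^2}{b^2}\, \CW_Y^2,
\]
which, after clearing denominators and changing signs (both squares are negative by Theorem \ref{thm1.6}), becomes the identity of positive integers
\[
b^2\,(-\CW_X^2) \;=\; a^2\,(-\CW_Y^2).
\]
Here I use that the BBF form is integer-valued on integral classes and that $\CW_X, \CW_Y$ are integral classes, so that $-\CW_X^2, -\CW_Y^2 \in \BZ_{>0}$.

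The second step is to exploit coprimality. Since $\gcd(a,b)=1$ we have $\gcd(a^2,b^2)=1$, so from the displayed equation we read off $a^2 \mid (-\CW_X^2)$ and $b^2 \mid (-\CW_Y^2)$. Combined with the uniform bound $0 < -\CW_X^2, -\CW_Y^2 < C_0$ provided by Theorem \ref{thm1.6}, this immediately yields $a^2 < C_0$ and $b^2 < C_0$.

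There is no real obstacle here; the whole content of the corollary is that Theorem \ref{thm1.6} gives a uniform bound on the BBF norm of \emph{integral} MBM classes, and the role of the coprimality hypothesis is exactly to promote this into a uniform bound on the numerator and denominator of the ratio by which a rational Hodge isometry can scale one MBM class to another. The only points to be careful about are (i) that $-\CW_Y^2$ (and not merely some rational quantity) is a positive integer, which uses integrality of the BBF form on the $K3^{[n]}$-lattice, and (ii) that Theorem \ref{thm1.6} applies to $\CW_Y$ as well as $\CW_X$, which is fine since both are assumed to be MBM classes on varieties of $K3^{[n]}$-type.
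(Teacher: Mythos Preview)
Your proof is correct and follows essentially the same approach as the paper's: both use that $\phi$ is an isometry to get $a^2/b^2 = \CW_X^2/\CW_Y^2$, then invoke Theorem~\ref{thm1.6} together with coprimality of $a,b$. You spell out the divisibility step ($a^2 \mid -\CW_X^2$, $b^2 \mid -\CW_Y^2$) more explicitly than the paper does, but the argument is the same.
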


\begin{proof}
    Since $\phi$ is an isometry, we have
    \[
    \frac{a^2}{b^2} = \frac{\CW_X^2}{\CW_Y^2}
    \]
    By Theorem \ref{thm1.6}, both $-\CW_X^2$ and $-\CW_Y^2$ are positive integers $< C_0$. The corollary follows from the assumption that $a,b$ are coprime.
\end{proof}

\subsection{Proof strategy}\label{Sec1.5}
We discuss the strategy of the proof of Theorem \ref{thm0.3}; Theorem \ref{thm0.2} is then deduced as a special case. 

Let $X$ be a variety of $K3^{[n]}$-type. It suffices to prove Theorem \ref{thm0.3} for a hyper-K\"ahler birational model $X'$ with a birational map $X \dashrightarrow X'$ which corresponds to a chamber in $\mathcal{BK}_X$ adjacent to the K\"ahler cone of~$X$. By Theorem \ref{thm1.6}, the wall between these two chambers is given by an algebraic MBM class $\CW \in \mathrm{Pic}(X)$.

Now we choose a $K3$ surface $S$ and a Mukai vector $v_0=(r,mH,s)$ as in the beginning of Section \ref{Section1.3}, which yields the Hodge isometry $\phi_{\CU^{[n]}}$. Associated to these, we have the moduli space of Hodge isometries~$\FM_{\phi}$, and the component $\FM^0_\phi$ that contains the quadruple $(M^{[n]}, \eta_{M^{[n]}}, S^{[n]}, \eta_{S^{[n]}})$. 

For the given birational $X,X'$, by Lemma \ref{lem1.1}, we can complete them to a pair of quadruples
\[
(Y, \eta_{Y}, X, \eta_X),\quad (Y', \eta_{Y'}, X', \eta_{X'}) \in \FM^0_{\phi}
\]
such that the marking $\eta_{X'}$ is induced by $\eta_X$ via the birational map $X \dashrightarrow X'$.\footnote{Here we would like $X, X'$ to be deformed from $S^{[n]}$ later in Section \ref{section2}.} In particular, the pair $(X, \eta_X), (X', \eta_{X'})$ is inseparable. We note that the pair $(Y,\eta_Y), (Y', \eta_{Y'})$ is also inseparable. This is because they share the same period and lie in the same connected component of $\FM_\Lambda$. Moreover, by definition, $\phi^{-1}$ sends a K\"ahler class of $X$ (resp.~$X'$) to a K\"ahler class of~$Y$ (resp.~$Y'$).\footnote{Here we suppress the markings and still use $\phi$ to denote the Hodge isometry $H^2(Y,\BQ) \to H^2(X,\BQ)$ for notational convenience.} Therefore, if 
\begin{equation}\label{wall_condition}
\textup{$\phi^{-1}$ does not send $\CW$ to a class on $Y$ that is proportional to an MBM class}, 
\end{equation}
there must be a point on the wall separating the K\"ahler cones of $X,X'$ which is sent to the interior of a chamber of the positive cone $\mathcal{C}_{Y}$. In particular, there exists a hyper-K\"ahler birational model ${Y''}$ of $Y$ with a marking $({Y''}, \eta_{Y''})$ such that the pair $(Y,\eta_Y), (Y'', \eta_{Y''})$ is inseparable and 
\[
(Y'', \eta_{Y''}, X, \eta_X),\quad (Y'', \eta_{Y''}, X', \eta_{X'}) \in \FM_\phi.
\]
Furthermore, by Lemma \ref{lem1.2}, both points lie in the connected component we started with:
\[
(Y'', \eta_{Y''}, X, \eta_X),\quad (Y'', \eta_{Y''}, X', \eta_{X'}) \in \FM^0_\phi.
\]

By Theorem \ref{thm1.4}(b, c), we obtain Brauer classes $\alpha_X, \alpha_{Y''}$ on $X,Y''$ respectively, such that
\begin{equation*}\label{1.4_1}
 D^b(Y'', \alpha_{Y''}) \simeq D^b(X, \alpha_X), \quad   D^b(Y'', \alpha_{Y''}) \simeq D^b(X', \alpha_{X'}).
 \end{equation*}
Here the Brauer classes $\alpha_X, \alpha_{Y''}$ only depend on the markings $(X, \eta_X), (Y'',\eta_{Y''})$ respectively, and the Brauer class $\alpha_{X'}$ is induced by $\alpha_{X}$. Combining both equivalences yields 
\begin{equation*}\label{tw_D_eq}
D^b(X, \alpha_X) \simeq D^b(X', \alpha_{X'})
\end{equation*}
whose Fourier--Mukai kernel is the composition of two (twisted) hyperholomorphic bundles.

In the next section, we show that for any pair $X, X'$ as above with a Brauer class $\alpha \in \mathrm{Br}(X)$ and an algebraic MBM class~$\CW \in \mathrm{Pic}(X)$, a careful choice of the $K3$ surface $S$ and the Mukai vector $v_0=(r, mH,s)$ as in Section~\ref{Section1.3} can simultaneously ensure that the condition (\ref{wall_condition}) holds and the induced Brauer class is as desired:
\begin{equation}\label{vanish2}
\alpha_X = \alpha.
\end{equation}
This completes the proof of Theorem \ref{thm0.3}.

\begin{rmk}
For a general birational transform $X \dashrightarrow X'$ of varieties of $K3^{[n]}$-type, which does not correspond to adjacent chambers in the birational K\"ahler cone $\mathcal{BK}_X$, our proof realizes the derived equivalence
\[
D^b(X,\alpha) \simeq D^b(X', \alpha')
\]
via two sequences of varieties $X_1, \ldots, X_{t - 1}$ and $Y_1, \ldots, Y_t$, with each $X_i$ birational to $X, X'$, such that
\begin{multline} \label{eq:seq}
D^b(X, \alpha) \simeq D^b(Y_1, \alpha_{Y_1}) \simeq D^b(X_1, \alpha_{X_1}) \simeq D^b(Y_2, \alpha_{Y_2}) \simeq \cdots \\
\simeq D^b(Y_{t - 1}, \alpha_{Y_{t - 1}}) \simeq D^b(X_{t - 1}, \alpha_{X_{t-1}}) \simeq D^b(Y_t, \alpha_{Y_t}) \simeq D^b(X', \alpha').
\end{multline}
Each of the derived equivalences in \eqref{eq:seq} is induced by a (twisted) hyperholomorphic bundle.
\end{rmk}

\section{Proof of Theorem \ref{thm0.3}}\label{section2}

From now on, we fix a variety $X$ of $K3^{[n]}$-type, a Brauer class $\alpha \in \mathrm{Br}(X)$, and an algebraic MBM class $\CW \in \mathrm{Pic}(X)$ as in Section \ref{Sec1.5}. In particular, the variety $X$ has Picard rank~$\geq 2$.\footnote{Theorem \ref{thm0.3} is automatically true if $X$ has Picard rank $1$, since any birational transform $X \dashrightarrow X'$ is necessarily an isomorphism.} Using \eqref{Br} and \eqref{B_Field}, we present the Brauer class $\alpha$ by a class in the rational transcendental lattice~$T(X)_\BQ \subset H^2(X, \BQ)$:
\[
\alpha = \left[-\frac{\CB}{d}\right], \quad \CB \in T(X), \quad d \in \BZ_{> 0}.
\]
Up to adjusting $-\frac{\CB}{d}$ by an integral class in $T(X) \subset H^2(X, \BZ)$, we may further assume that the class $\CB$ satisfies
\[
\CB^2 = 2e > 0.
\]

\subsection{Divisor classes}
Recall that the divisibility $\mathrm{div}(\omega)$ of a class $\omega \in H^2(X, \BZ)$ is the positive generator of the subgroup
\[
\{(\omega, \mu), \, \mu \in H^2(X, \BZ)\} \subset \BZ
\]

\begin{lem}
There exists a class $\CA\in \mathrm{Pic}(X)$ such that
\[
(\CA, \CW) \neq 0, \quad \mathrm{div}(\CA) = 1.
\]
\end{lem}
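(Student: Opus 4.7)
The plan is to split the problem into finding a divisibility-$1$ class in $\Pic(X)$ and then arranging non-orthogonality to $\CW$. Fix a marking $H^2(X, \BZ) \cong \Lambda = \Lambda_{K3} \oplus \BZ\delta$ and write $\CA = \CA_{K3} + \lambda \delta$. Using the unimodularity of $\Lambda_{K3}$, one computes directly that $\mathrm{div}(\CA) = \gcd\bigl(d_{K3}(\CA_{K3}),\, (2n-2)\lambda\bigr)$, where $d_{K3}(\CA_{K3})$ denotes the content of $\CA_{K3}$ in $\Lambda_{K3}$. Hence it suffices to produce $\CA \in \Pic(X)$ such that $\CA_{K3}$ is primitive in $\Lambda_{K3}$ and $(\CA, \CW) \neq 0$.

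The key step is to show that the image $\pi(\Pic(X)) \subset \Lambda_{K3}$ contains a primitive vector of $\Lambda_{K3}$. I plan to argue by contradiction: failing this, the structure theorem forces $\pi(\Pic(X)) \subset p\Lambda_{K3}$ for some prime $p$. Let $\Pic(X)_p \subset \Pic(X)$ be the index-$p$ kernel of the surjection $\CA \mapsto \lambda \bmod p$ (if this map vanishes identically, then $\Pic(X) \subset p\Lambda$, and iterating saturation forces $\Pic(X) = 0$, which is excluded since $\rho \geq 2$). Combining the two conditions shows $\Pic(X)_p \subset p\Lambda$, and saturation of $\Pic(X) \subset \Lambda$ upgrades this to $\Pic(X)_p \subset p\,\Pic(X)$. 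Together with the trivial inclusion $p\,\Pic(X) \subset \Pic(X)_p$, this yields $\Pic(X)_p = p\,\Pic(X)$; an index comparison in $\Pic(X)$ then gives $p = p^\rho$, contradicting $\rho = \mathrm{rk}\,\Pic(X) \geq 2$.

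To finish, pick $\CA_0 \in \Pic(X)$ with $(\CA_0)_{K3}$ primitive, and $\CA_1 \in \Pic(X)$ with $(\CA_1, \CW) \neq 0$; such $\CA_1$ exists because the BBF form on $\Pic(X)$ is non-degenerate, with the positive direction supplied by an ample class. If $(\CA_0, \CW) \neq 0$ we are done; otherwise, consider $\CA = \CA_0 + k\CA_1$. When $(\CA_0)_{K3}$ and $(\CA_1)_{K3}$ are $\BQ$-linearly independent, a Chinese-remainder argument shows that $(\CA_0)_{K3} + k(\CA_1)_{K3}$ fails $\Lambda_{K3}$-primitivity only for $k$ in a finite union of arithmetic progressions (one coset per prime in the gcd of the $2\times 2$ minors of $[(\CA_0)_{K3} | (\CA_1)_{K3}]$), so any good $k \neq 0$ works. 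In the residual subcase $(\CA_1)_{K3} = c\,(\CA_0)_{K3}$ with $c \in \BZ$, the class $\CA_1 - c\CA_0$ lies in $\BZ\delta \cap \Pic(X)$ and is non-zero (otherwise $(\CA_1, \CW) = c(\CA_0, \CW) = 0$), so saturation forces $\delta \in \Pic(X)$; moreover $\CW_\delta \neq 0$ (else $(\CA_1, \CW)$ would still vanish), and then $\CA = \CA_0 + k\delta$ with $k \neq 0$ works.

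The main obstacle is the saturation-index argument in the key step, which exploits the specific decomposition $\Lambda = \Lambda_{K3} \oplus \BZ\delta$ with $\Lambda_{K3}$ unimodular in an essential way; the rest of the proof is a routine perturbation, with the residual linearly-dependent subcase being the only place where one must switch perturbation direction from $\CA_1$ to $\delta$.
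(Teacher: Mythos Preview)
Your argument is correct, and it takes a genuinely different route from the paper's. The paper does not compute $\mathrm{div}(\CA)$ explicitly; instead it observes that for any $g\in O(\Lambda)$ the sublattice $g(\delta)^\perp\subset\Lambda$ is unimodular (being isometric to $\Lambda_{K3}$), so every primitive vector in $g(\delta)^\perp$ already has divisibility~$1$ in~$\Lambda$. The whole problem then becomes finding an integral $g$ with $g(\delta)^\perp\cap\Pic(X)\not\subset\CW^\perp$, and this is done in one stroke by a Zariski-density argument: the bad locus is Zariski closed in $O(\Lambda)_{\BC}$, its complement is nonempty because $\rho\ge 2$, and $O(\Lambda)$ is Zariski dense in $O(\Lambda)_{\BC}$. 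By contrast, you work entirely inside the fixed decomposition $\Lambda=\Lambda_{K3}\oplus\BZ\delta$: you compute $\mathrm{div}(\CA)=\gcd(d_{K3}(\CA_{K3}),(2n-2)\lambda)$, show by a saturation/index argument that $\pi(\Pic(X))$ must contain a $\Lambda_{K3}$-primitive vector, and then perturb to achieve $(\CA,\CW)\neq 0$. Your approach is more elementary and more explicit, avoiding any appeal to algebraic groups; the paper's is shorter and hides the combinatorics in the single density statement. A minor remark: for the existence of $\CA_1$ you could simply take $\CA_1=\CW$, since $\CW\in\Pic(X)$ and $(\CW,\CW)<0$ by the MBM condition, which sidesteps invoking non-degeneracy of the BBF form on $\Pic(X)$.
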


\begin{proof}
    We pick a marking identifying $H^2(X, \BZ)$ with a $K3^{[n]}$-lattice $\Lambda = \Lambda_{K3} \oplus \BZ\delta$. 
    For any~$g\in O(\Lambda)$, since $g(\delta)^\perp$ is a unimodular $K3$-lattice, any primitive vector $\omega \in g(\delta)^\perp \subset \Lambda$ satisfies $\mathrm{div}(\omega) =1.$    
   We would like to choose $g$ so that there exists 
   $\CA \in g(\delta)^\perp \cap \mathrm{Pic}(X)$ satisfying $(\CA, \CW) \neq 0$.     
   In other words, we want 
   \[
   g(\delta)^\perp \cap \mathrm{Pic}(X) \neq \CW^\perp \cap  \mathrm{Pic}(X).
   \]
   
   If we base change to $\mathbb{C}$, the set of $g \in O(\Lambda)_{\mathbb{C}}$ such that
   \[
   g(\delta)^\perp \cap \mathrm{Pic}(X)_{\mathbb{C}} \neq \CW^\perp \cap  \mathrm{Pic}(X)_{\mathbb{C}}
   \]
    is open in the Zariski topology.  Furthermore, it is nonempty since $X$ has Picard rank $\geq 2$. Since $O(\Lambda)$ is Zariski-dense in $O(\Lambda)_{\mathbb{C}}$, we can find $g \in O(\Lambda)$ satisfying this condition as well.
\end{proof}

Up to replacing $\CA$ by $-\CA$, we may assume
\[
C_1:= (\CA, \CW) > 0
\]
which we fix from now on.

\begin{prop}\label{prop2.2}
For any $N>0$, there exists a class $\CD\in \mathrm{Pic}(X)$ of divisibility $1$, satisfying
\[
\CD^2>N, \quad (\CD,\CW) = C_1.
\]
\end{prop}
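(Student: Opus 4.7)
The plan is to seek $\CD$ of the form $\CD_k := \CA + k\CE_0$, where $\CE_0 \in \CW^\perp \cap \mathrm{Pic}(X)$ is a fixed class of positive BBF norm and $k \in \BZ$ is chosen with $|k|$ large. Such an $\CE_0$ exists: letting $\rho := \mathrm{rk}\,\mathrm{Pic}(X) \geq 2$, the real Picard lattice $\mathrm{Pic}(X)_\BR$ has signature $(1, \rho-1)$, and because $\CW^2 < 0$ by Theorem \ref{thm1.6}, the orthogonal complement $\CW^\perp \cap \mathrm{Pic}(X)_\BR$ has signature $(1, \rho-2)$ and therefore contains integral vectors of positive norm. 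With this choice, $(\CD_k, \CW) = (\CA, \CW) = C_1$ automatically for every $k$, and $\CD_k^2 = \CA^2 + 2k(\CA, \CE_0) + k^2\CE_0^2$ tends to $+\infty$ as $|k|\to\infty$. The only nontrivial condition is $\mathrm{div}(\CD_k) = 1$, and this is where the main work lies.

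For the divisibility, I would work with the criterion that $\omega \in \Lambda$ has $\mathrm{div}(\omega) = 1$ if and only if $\omega \notin p\Lambda^\vee$ for every prime $p$, where $\Lambda^\vee$ denotes the dual lattice under the BBF form. Fix a prime $p$ and consider the reductions $\overline{\CA},\overline{\CE_0}$ in the $\mathbb{F}_p$-vector space $\Lambda^\vee/p\Lambda^\vee$. The condition $\CD_k \in p\Lambda^\vee$ becomes the linear equation $k\,\overline{\CE_0} = -\overline{\CA}$ over $\mathbb{F}_p$. Since $\mathrm{div}(\CA) = 1$ we have $\overline{\CA} \neq 0$, so this equation admits a solution in $k$ only if $\overline{\CA}$ and $\overline{\CE_0}$ are proportional in $\Lambda^\vee/p\Lambda^\vee$, in which case the set of solutions is a single residue class modulo $p$. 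On the other hand, $\CA$ and $\CE_0$ are linearly independent over $\BQ$ (because $(\CA,\CW) = C_1 \neq 0 = (\CE_0,\CW)$), so their reductions are linearly independent mod $p\Lambda^\vee$ for all but finitely many ``bad'' primes $p_1,\ldots,p_s$ --- precisely those dividing the gcd of the $2\times 2$ minors of the coordinate matrix of $(\CA, \CE_0)$ in $\Lambda^\vee$.

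Combining these observations, the exceptional set $\{k \in \BZ : \mathrm{div}(\CD_k) > 1\}$ is contained in the union, over the finitely many bad primes $p_i$, of at most one residue class modulo each $p_i$. Its complement therefore has positive density in $\BZ$ and contains integers $k$ of arbitrarily large absolute value; choosing any such $k$ with $\CD_k^2 > N$ (possible since $\CD_k^2 \to +\infty$) and setting $\CD := \CD_k$ completes the construction. The main obstacle I anticipate is precisely this divisibility step: while the signature calculation that produces $\CE_0$ is routine, ensuring $\mathrm{div}(\CD_k) = 1$ requires the reduction-mod-$p$ analysis above rather than the identification of a single distinguished $k$.
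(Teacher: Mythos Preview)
Your argument is correct and follows the same overall ansatz as the paper: both take $\CD = \CA + k\CE_0$ with $\CE_0 \in \CW^\perp \cap \mathrm{Pic}(X)$ of positive norm, so that $(\CD,\CW) = C_1$ and $\CD^2 \to +\infty$ automatically, leaving only the divisibility condition to arrange. The existence of such $\CE_0$ via the signature of $\CW^\perp \cap \mathrm{Pic}(X)_\BR$ is cleanly justified, and the paper simply asserts it.

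The divergence is in how divisibility $1$ is obtained. The paper chooses auxiliary classes $\mu,\nu \in H^2(X,\BZ)$ with $(\CA,\mu)=1$, $(\CE_0,\mu)\neq 0$, $(\CA,\nu)=0$, $(\CE_0,\nu)\neq 0$, and then invokes Dirichlet's theorem on primes in arithmetic progressions to find arbitrarily large $k$ with $|1 + k(\CE_0,\mu)|$ prime; a short check using $(\CD_k,\nu)$ then forces $\mathrm{div}(\CD_k)=1$. Your route is more elementary: reducing the image of $\CD_k$ in $\Lambda^\vee/p\Lambda^\vee$ and observing that, because $\CA$ and $\CE_0$ are $\BQ$-linearly independent, their reductions are independent for all but finitely many primes, so the bad set of $k$ is a finite union of residue classes. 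This avoids Dirichlet entirely and gives a set of good $k$ of positive density rather than a sparse sequence. Either method suffices here; yours is arguably the more natural one, while the paper's trades elementarity for brevity.
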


\begin{proof}
    Since $X$ has Picard rank $\geq 2$, we have $\CW^\perp \cap \mathrm{Pic}(X) \neq 0$. Pick an element 
    \[
    \omega \in \CW^\perp \cap \mathrm{Pic}(X), \quad \omega^2 >0.
    \]
    Then for large enough $t \in \BZ_{>0}$, we have
    \[
    (\CA + t\omega, \CW) = C_1, \quad (\CA + t\omega)^2 >N.
    \]
    It suffices to show that there exist infinitely many choices of $t\in \BZ_{>0}$ satisfying 
    \[
    \mathrm{div}(\CA + t\omega)=1.
    \]
    We pick an integral class $\mu \in H^2(X,\BZ)$ such that
    \[
    (\CA, \mu)=1, \quad (\omega, \mu) \neq 0;
    \]
    then we pick another integral class $\nu \in H^2(X, \BZ)$ such that
    \[(\CA, \nu) = 0, \quad (\omega, \nu) \neq 0.
    \]
    By Dirichlet's theorem on primes in arithmetic progressions, there exists a sufficiently large $t\in \BZ_{>0}$ such that the absolute value of $1 + t(\omega, \mu)$ is a prime number. We claim that for such a choice of $t$, the class $\CA + t\omega$ must have divisibility $1$. This follows immediately from the observation that 
    \[
    \mathrm{div}(\CA + t\omega) \mid 1 + t(\omega, \mu), \quad \mathrm{div}(\CA + t\omega) \mid (\omega, \nu). \qedhere
    \]
\end{proof}

\subsection{Mukai vectors}
We construct the $K3$ surface $S$ and the Mukai vector $v_0$ of Section \ref{Sec1.5}.

By Proposition \ref{prop2.2}, we can find $\CD \in \mathrm{Pic}(X)$ with 
\begin{equation}\label{choice}
\mathrm{div}(\CD)=1, \quad (\CD,\CW) = C_1 > 0, \quad \CD^2 = 2g > 2C_0C_1,
\end{equation}
where $C_0$ is the constant in Theorem \ref{thm1.6}. Repeating the same argument as in Proposition \ref{prop2.2}, we also find $t \in \BZ_{>0}$ such that
\[
\mathrm{div}(\CD+4gtd \CB) =1.
\]

Let $(S, H)$ be a primitively polarized $K3$ surface of Picard rank $1$ of degree
\[
H^2 = 2g\left(1+ 4gt^2d^4(n-1) +16gt^2d^2e\right) > 0.
\]
We observe that both classes
\[
H - 2gtd^2\delta\in \mathrm H^2(S^{[n]}, \BZ), \quad \CD+ 4gtd \CB \in H^2(X,\BZ)
\]
are of divisibility 1 and have the same norm, where we have used that $(\CD, \CB)=0$ since $\CB$ is transcendental. Therefore, by \cite[Example 3.8]{GHS} and \cite[Theorem 9.8]{Torelli}, there is a parallel transport
    \[
    \rho: H^2(S^{[n]}, \BZ)  \to H^2(X, \BZ)
    \]
satisfying
\begin{equation} \label{eq:rhorho}
\rho(H - 2gtd^2\delta) = \epsilon (\CD+ 4gtd \CB),
\end{equation}
where $\epsilon = \pm 1$ is a sign determined by the orientation. 

We now consider the Mukai vector
\[
v_0:= \left(16gt^2d^4, \, \epsilon\cdot  4td^2H, \, 1+ 4gt^2d^4(n-1) +16gt^2d^2e\right),
\]
which clearly satisfies
\[
\mathrm{gcd}\left(16gt^2d^4, 1+ 4gt^2d^4(n-1) +16gt^2d^2e\right)=1, \quad 16gt^2d^4 \geq 2, \quad v_0^2 = 0.
\]
Since $g > 1$, we have
\[
4gtd^2 \nmid \frac{H^2}{2} + 1 = g\left(1+ 4gt^2d^4(n-1) +16gt^2d^2e\right) + 1
\]
which by \cite[Lemma 1.2]{Y} implies that all stable sheaves on $S$ with Mukai vector $v_0$ are stable vector bundles. Therefore, the moduli space $M$ of stable vector bundles on $S$ with Mukai vector $v_0$ is a $K3$ surface of Picard rank $1$ with a universal bundle $\CU$ on $M \times S$ which we fix from now on. Also fixed are the markings $\eta_{M^{[n]}}, \eta_{S^{[n]}}$ as in Theorem \ref{thm1.4}, as well as the induced marking
\[
\eta_X := \eta_{S^{[n]}} \circ \rho^{-1}: H^2(X, \BZ) \xrightarrow{\simeq} \Lambda.
\]


\begin{prop}\label{prop2.3} Let $S, M, \CU$ be as above.
\begin{enumerate}
    \item[(a)] The primitive polarization $\widehat{H}$ of $M$ satisfies $\widehat{H}^2 = H^2$.
    \item[(b)] Let $s\in S$ be a point. Assume that the vector bundle $\CU|_s$ has Mukai vector 
    \[
    \widehat{v}_0 = (16gt^2d^4, k\widehat{H}, \widehat{s}) \in H^*(M, \BZ). 
    \]
    Then we have
    \[
    \mathrm{gcd}( 16gt^2d^4, k ) = 4td^2.
    \]
\end{enumerate}
\end{prop}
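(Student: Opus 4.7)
For part (a), my plan is to invoke Mukai's theorem, which provides a Hodge isometry $\theta_{v_0}: v_0^\perp/\BZ v_0 \xrightarrow{\sim} H^2(M,\BZ)$ on the full Mukai lattice of $S$ and restricts to an isomorphism of algebraic sublattices $v_{0,\mathrm{alg}}^\perp/\BZ v_0 \xrightarrow{\sim} \mathrm{Pic}(M) = \BZ\widehat H$. Since $\mathrm{Pic}(S) = \BZ H$, the algebraic Mukai lattice $\Lambda_{S,\mathrm{alg}} = \BZ \oplus \BZ H \oplus \BZ$ has Gram determinant $-H^2$. The vector $v_0$ is primitive (using $\gcd(r, s_0) = 1$ verified in Section 2.2) and isotropic, so by the standard splitting for even lattices one finds $u \in \Lambda_{S,\mathrm{alg}}$ with $u \cdot v_0 = 1$ and, after correcting by a multiple of $v_0$, $u^2 = 0$; then $\BZ v_0 \oplus \BZ u$ is a hyperbolic orthogonal direct summand whose complement is a rank-one sublattice of discriminant $H^2$, canonically identified with $v_{0,\mathrm{alg}}^\perp/\BZ v_0$. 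Since $\widehat H$ is a polarization, $\widehat H^2 > 0$, and hence $\widehat H^2 = H^2$.

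For part (b), the universal bundle $\CU$ gives a Fourier--Mukai equivalence $\Phi_\CU : D^b(S) \xrightarrow{\sim} D^b(M)$ (the input into Markman's BKR construction), so by Mukai's theorem its cohomological action is a lattice isometry $\Phi_\CU^H : \widetilde H(S,\BZ) \xrightarrow{\sim} \widetilde H(M,\BZ)$. A direct unwinding of the kernel gives $\Phi_\CU(\BC_s) = \CU|_s$ (with no shift), so $\Phi_\CU^H$ sends the primitive isotropic class $(0,0,1) = v(\BC_s)$ to $\widehat v_0 = v(\CU|_s)$; consequently $\widehat v_0$ is primitive and isotropic. Write $\widehat v_0 = (r, k\widehat H, \widehat s)$, set $\ell := 4td^2$ so that $r = \ell^2 g$, and let $s_0 := 1 + 4gt^2 d^4(n-1) + 16 g t^2 d^2 e$ denote the third coordinate of $v_0$. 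Using $\widehat H^2 = H^2 = 2g s_0$ from (a), the isotropy $\widehat v_0^2 = 0$ collapses to
\[
k^2 s_0 \;=\; \ell^2 \widehat s.
\]
A term-by-term check gives $s_0 \equiv 1 \pmod{\ell}$ (both $4gt^2d^4(n-1) = \ell \cdot gtd^2(n-1)$ and $16gt^2d^2 e = \ell \cdot 4gte$ are divisible by $\ell$), so $\gcd(\ell, s_0) = 1$, forcing $\ell \mid k$. Writing $k = \ell j$ then yields $\widehat s = j^2 s_0$ and $\widehat v_0 = (\ell^2 g,\, \ell j\, \widehat H,\, j^2 s_0)$. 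Primitivity of $\widehat v_0$ immediately forces $\gcd(\ell g, j) = 1$, from which
\[
\gcd(r, k) \;=\; \gcd(\ell^2 g,\, \ell j) \;=\; \ell\gcd(\ell g, j) \;=\; \ell \;=\; 4 t d^2.
\]

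I do not expect any substantive obstacle beyond bookkeeping: the entire argument hinges on Mukai's theorem (providing the Hodge isometry in (a) and transferring primitivity from $(0,0,1)$ to $\widehat v_0$ in (b)), the decomposition $\Lambda_{S,\mathrm{alg}} = U \oplus \BZ H$, and the explicit arithmetic of $v_0$ from Section 2.2. The only mild subtlety is sign-tracking in the discriminant computation of (a), which is resolved by positivity of $\widehat H^2$.
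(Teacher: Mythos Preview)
Your proposal is correct and follows essentially the same line as the paper. For (a) the paper simply cites \cite[Appendix~A]{Y}, while you unpack the underlying lattice argument via Mukai's isometry $v_0^\perp/\BZ v_0 \simeq H^2(M,\BZ)$ and the splitting $\Lambda_{S,\mathrm{alg}} = U \oplus \BZ w$ (valid because $\gcd(r,s_0)=1$ forces $\mathrm{div}(v_0)=1$); for (b) both arguments rest on the same two facts---$\widehat v_0$ is primitive isotropic (the paper cites \cite[Theorem~2.2]{Y}, you obtain it from $\Phi_\CU^H(0,0,1)=\widehat v_0$) and $\gcd(\ell,s_0)=1$---and then carry out identical arithmetic.
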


\begin{proof}
    (a) follows from \cite[Appendix A]{Y}. For (b), we note that \cite[Theorem 2.2]{Y} implies that the Mukai vector $\widehat{v}_0$ is primitive with $\widehat{v}_0^2=0$. Using (a), we deduce that
    \[
    \widehat{s} = \left(\frac{k}{4td^2} \right)^2 \left(1+ 4gt^2d^4(n-1) +16gt^2d^2e \right) \in \BZ.
    \]
    Therefore, we have that $k$ is divisible by $4td^2$, which shows 
    \[
    4td^2 \mid \mathrm{gcd}(16gt^2d^4, k).
    \]
    On the other hand, if $\frac{k}{4td^2}$ is not coprime to $16gt^2d^4$, the Mukai vector $\widehat{v}_0$ is divisible by their common factor. This contradicts the fact that $\widehat{v}_0$ is primitive.
\end{proof}

\subsection{End of proof} We complete the proof using the $K3$ surface $S$, the Mukai vector $v_0$, and the universal bundle $\CU$ constructed in the last section. This gives the vector bundle $\CU^{[n]}$ on~$M^{[n]} \times S^{[n]}$. We write 
\[
c_1(\CU^{[n]}) = a_{M^{[n]}} + a_{S^{[n]}} \in H^2(M^{[n]}, \BZ) \oplus H^2(S^{[n]}, \BZ)
\]
with
\[
a_{M^{[n]}} \in H^2(M^{[n]}, \BZ), \quad a_{S^{[n]}}\in H^2(S^{[n]}, \BZ).
\]
Recall the natural identification
\begin{equation}\label{H2}
H^2(S^{[n]}, \BZ) = H^2(S, \BZ) \oplus \BZ\delta.
\end{equation}
By \cite[Equation (7.11)]{Markman}, we can present the class $a_{S^{[n]}}$ using (\ref{H2}):
\begin{equation*}
a_{S^{[n]}} = \mathrm{rk}(\CU^{[n]}) \cdot \left( \frac{\epsilon \cdot 4td^2H}{16gt^2d^4} -\frac{\delta}{2} \right) \in H^2(S^{[n]}, \BZ).
\end{equation*}
Via the parallel transport $\rho$ and (\ref{eq:rhorho}), we obtain
\begin{align*}
\rho\left(\frac{a_{S^{[n]}}}{\mathrm{rk}(\CU^{[n]})} \right) & = \rho\left(\frac{\epsilon \cdot H}{4gtd^2} -\frac{\delta}{2} \right)\\
& = \rho\left(\epsilon \left(\frac{H}{4gtd^2} -\frac{\delta}{2} \right)  + (\epsilon - 1) \frac{\delta}{2} \right) \\
& = \frac{\epsilon \cdot \rho(H - 2gtd^2\delta)}{4gtd^2} + \frac{(\epsilon - 1)}{2}\rho(\delta) \\
& = \frac{\CD + 4gtd\CB}{4gtd^2} + \frac{(\epsilon - 1)}{2}\rho(\delta) \\
& = \frac{\CB}{d} + \textup{[class in $\mathrm{Pic}(X)_\BQ$]} +\textup{[class in $H^2(X, \BZ)$]} \in H^2(X, \BQ).
\end{align*}
Hence, by Theorem \ref{thm1.4}(c), we have
\[
\alpha_X = \left[ -\rho\left(\frac{a_{S^{[n]}}}{\mathrm{rk}(\CU^{[n]})} \right)\right] = \left[-\frac{\CB}{d}\right] = \alpha.
\]

To complete the proof, it remains to address (\ref{wall_condition}). This is given by the following proposition.

\begin{prop}
Let $\FM_\phi^0$ be the connected component of the moduli space of Hodge isometries constructed from $S,M,\CU$ as above. For any quadruple 
\[
(Y, \eta_{Y}, X, \eta_X) \in \FM_{\phi}^0
\]
with $X, \CW$ fixed as above, the class $\phi^{-1}(\CW) \in H^2(Y, \BQ)$ is not proportional to any MBM class on $Y$. Here we suppress the markings and still use $\phi$ to denote the Hodge isometry $H^2(Y,\BQ) \to H^2(X,\BQ)$ for notational convenience.
\end{prop}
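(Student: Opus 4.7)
The plan is to argue by contradiction. Suppose $\phi^{-1}(\CW) = (a/b)\CW_Y$ for some primitive MBM class $\CW_Y \in \mathrm{Pic}(Y)$ and coprime integers $a, b > 0$ (absorbing any sign into $\epsilon$). I would combine a key pairing with $\widetilde{\CD}$, Corollary~\ref{cor1.7}, and the Hodge index theorem to extract a numerical contradiction.

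First, I would compute the pairing $(\phi^{-1}(\CW), \widetilde{\CD})_Y$. Using that $\phi$ is an isometry with $\phi(\widetilde{\CD}) = \epsilon(\CD + 4gtd\CB)$ by construction, together with $(\CW, \CD)_X = C_1$ and the orthogonality $(\CW, \CB)_X = 0$ (as $\CW \in \mathrm{Pic}(X)$ and $\CB \in T(X)_\BQ$), one obtains
\[
(\phi^{-1}(\CW), \widetilde{\CD})_Y = (\CW, \epsilon(\CD + 4gtd\CB))_X = \epsilon C_1.
\]
Substituting the proportionality assumption gives $(\CW_Y, \widetilde{\CD}) = \epsilon C_1 b/a$, which must be an integer; since $\gcd(a,b) = 1$, this forces $a \mid C_1$.

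Next, invoking Corollary~\ref{cor1.7} on the rational Hodge isometry $\phi^{-1}$, which sends the MBM class $\CW$ to the rational multiple $(a/b)\CW_Y$ of the MBM class $\CW_Y$, I obtain $a^2, b^2 < C_0$. Combined with $a \mid C_1$, the possible pairs $(a, b)$ are restricted to a finite list.

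Finally, I would apply the Hodge index theorem in the Lorentzian space $\mathrm{Pic}(Y)_\BR$ using the rational class $\phi^{-1}(\CD) = \epsilon\widetilde{\CD}_{alg} \in \mathrm{Pic}(Y)_\BQ$, which has positive BBF norm $\CD^2 = 2g$ and pairing $(\CW_Y, \phi^{-1}(\CD)) = C_1 b/a$. The orthogonal decomposition $\CW_Y = \tfrac{C_1 b}{2ga}\phi^{-1}(\CD) + \CW_Y^\perp$ with $(\CW_Y^\perp)^2 \leq 0$ yields $\CW_Y^2 \leq C_1^2 b^2/(2g a^2)$. Combined with $-C_0 < \CW_Y^2 < 0$, the bounds $a^2, b^2 < C_0$, the divisibility $a \mid C_1$, and crucially the choice $\CD^2 = 2g > 2C_0 C_1$ from Proposition~\ref{prop2.2}, I expect to reach a contradiction. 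The hardest step will be this last one: the naive Hodge index inequality is trivially compatible with $\CW_Y^2 < 0$, so the contradiction must come from a sharper arithmetic analysis, presumably exploiting the full large norm $\widetilde{\CD}^2 = 2g(1 + 16gt^2 d^2 e)$, the primitivity (divisibility~$1$) of $\widetilde{\CD}$, and the integral structure of the rank-$2$ sublattice $\BZ\CW_Y + \BZ\widetilde{\CD} \subset H^2(Y, \BZ)$ inside the $K3^{[n]}$-lattice.
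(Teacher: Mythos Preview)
Your first two steps are fine (and in fact $\widetilde{\CD}$ really is integral: under the markings it corresponds to $\pm\widehat{H}-2\epsilon gtd^{2}\delta\in H^{2}(M^{[n]},\BZ)$, since $\phi_{\CU}$ restricts to an isometry $\BQ\widehat{H}\to\BQ H$ and both have the same square). So you do obtain $a\mid C_{1}$ and $a^{2},b^{2}<C_{0}$. The problem is that these constraints say nothing about $g$, and your Hodge index inequality
\[
\CW_{Y}^{2}\ \le\ \frac{C_{1}^{2}b^{2}}{2g\,a^{2}}
\]
is vacuous: the left side is negative and the right side positive. No amount of ``sharper arithmetic'' on the rank-$2$ sublattice $\BZ\CW_{Y}+\BZ\widetilde{\CD}$ will help, because a hyperbolic rank-$2$ lattice of arbitrarily large (negative) discriminant embeds in $\Lambda$. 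In short, you have treated $\phi$ as a black-box rational isometry, and a black-box rational isometry can perfectly well carry an MBM class to a rational multiple of an MBM class.

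What is missing is precisely the arithmetic of $\phi_{\CU}$ coming from the chosen Mukai vector. The paper does not pair with a single test class; instead it uses that $a\phi_{\CU^{[n]}}^{-1}(\rho^{-1}(\CW))$ is \emph{integral} (here $a$ is the denominator in the proportionality) and feeds the $K3$-component $a\CW_{K3}$ into Buskin's explicit reflection formula for $\phi_{\CU}^{-1}$. That formula, together with the gcd computation of Proposition~\ref{prop2.3}(b), forces
\[
g\ \Big|\ (H,\,a\CW_{K3}).
\]
Unwinding $\rho$ via \eqref{eq:rhorho} then gives $g\mid aC_{1}$, hence $g\le aC_{1}<C_{0}C_{1}$ by Corollary~\ref{cor1.7}, contradicting the choice $g>C_{0}C_{1}$ in \eqref{choice}. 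The whole point of the elaborate Mukai vector $v_{0}=(16gt^{2}d^{4},\,\epsilon\cdot 4td^{2}H,\,\ast)$ is to manufacture this divisibility by $g$; an approach that does not invoke Buskin's formula and Proposition~\ref{prop2.3}(b) cannot see it.
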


\begin{proof}

The main idea of the argument is that, for our choice of the Mukai vector $v_0$, 
by a calculation of Buskin \cite{Buskin},
the rational Hodge isometry $\phi^{-1}$ is conjugate to a reflection by a vector of large norm.  By Corollary \ref{cor1.7}, we then show that it cannot send $\CW$ to a class proportional to an MBM class.

The details are as follows. Since the MBM classes are deformation invariant, we only need to treat the Hodge isometry
\[
\phi_{\CU^{[n]}}: H^2(M^{[n]}, \BQ) \to H^2(S^{[n]}, \BQ)
\]
which can be further simplified under the identification (\ref{Hilb}):
\[
(\phi_\CU, \mathrm{id}): H^2(M, \BQ) \oplus \BQ\delta \to H^2(S, \BQ) \oplus \BQ\delta.
\]
Assume that 
\begin{equation}\label{bad}
\phi_{\CU^{[n]}}^{-1}(\rho^{-1}(\CW)) = \frac{b}{a} \CW'
\end{equation}
with $\CW'$ an MBM class on $M^{[n]}$ and $a, b$ coprime. We write 
\[
\rho^{-1}(\CW) = \CW_{K3} + \lambda \delta, \quad \CW_{K3} \in H^2(S,\BZ), \quad \lambda\in \BZ.
\]
The equation (\ref{bad}) implies that $\phi_{\CU}^{-1}(a\CW_{K3})$ is an integral class. By the formula right before \cite[Conclusion 3.8]{Buskin}, the integrality forces the pairing
\[
(H, a\CW_{K3}) \in \BZ
\]
to be divisible by
\[
\frac{16gt^2d^4}{\mathrm{gcd}\left(16gt^2d^4, 4td^2k\right)} = g,
\]
where we have used Proposition \ref{prop2.3}(b) in the last equation.

On the other hand, we have
\[
(H, a\CW_{K3}) = (H, \rho^{-1}(a\CW)) = (\rho(H), a\CW) = \epsilon  (\CD,a\CW) + \textup{[integer divisible by $g$]},
\]
where the last equality uses \eqref{eq:rhorho}. In particular, we find
\[
g \mid (\CD, a\CW) = aC_1. 
\]
Combined with Corollary \ref{cor1.7}, this implies 
\[
g \leq a^2C_1 < C_0C_1
\]
which contradicts our choice of $\CD$ in (\ref{choice}). This shows that (\ref{bad}) cannot hold, which proves the proposition.
\end{proof}

In conclusion, both (\ref{wall_condition}) and (\ref{vanish2}) are settled by our choice of the $K3$ surface $S$ and the Mukai vector $v_0$; the proof of Theorem \ref{thm0.3} is now complete. \qed

\end{document}